\newtheorem{theorem}{Theorem}[section]
\newtheorem{lemma}[theorem]{Lemma}
\newtheorem{proposition}[theorem]{Proposition}
\newtheorem{corollary}[theorem]{Corollary}
\theoremstyle{definition}
\newtheorem{definition}[theorem]{Definition}
\newtheorem{example}[theorem]{Example}
\newtheorem{remark}[theorem]{Remark}
\numberwithin{equation}{section}
\newcommand{\Pb}{\mathbb P}
\newcommand{\R}{\mathbb R}
\newcommand{\N}{\mathbb N}
\newcommand{\X}{\mathbb X}
\newcommand{\Y}{\mathbb Y}
\newcommand{\Uball}{{\mathbb B}}
\newcommand{\dom}{{\rm dom}\, }
\newcommand{\graph}{{\rm gph}\,}
\newcommand{\epi}{{\rm epi}\,}
\newcommand{\nullv}{\mathbf{0}}
\newcommand{\conv}{{\rm co}\, }
\newcommand{\clco}{{\rm clco}\, }
\newcommand{\cone}{{\rm cone}\, }
\newcommand{\inte}{{\rm int}\, }
\newcommand{\Gdelta}{{\rm G}_\delta}       
\newcommand{\OP}{{\sf P}}    
\newcommand{\POP}{{\sf P}_p}    
\newcommand{\val}{{\sf val}}    
\newcommand{\sel}{\textsf{s}}     
\newcommand{\FReg}{{\sf R}}    
\newcommand{\Argmin}{{\sf Argmin}}    
\newcommand{\eva}{{\rm v}}     
\newcommand{\Lin}{{\mathcal L}}   
\newcommand{\SO}{{\mathbf{SO}}}   
\newcommand{\sur}[1]{{\rm sur}(#1)}    
\newcommand{\POPp}[1]{{\sf P}_{#1}}
\newcommand{\ball}[2]{{\rm B}\left[#1;#2\right]}   
\newcommand{\dist}[2]{{\rm dist}\left(#1;#2\right)}
\newcommand{\exc}[2]{{\rm exc}(#1;#2)}
\newcommand{\incr}[2]{{\rm inc}(#1;#2)}    
\newcommand{\Ncone}[2]{{\rm N}(#1;#2)}    
\newcommand{\haus}[2]{{\rm haus}(#1;#2)}
\newcommand{\Coder}[2]{{\rm D}^*#1(#2)}
\newcommand{\ind}[2]{\iota(#1;#2)}
\begin{document}
\setcounter{page}{1}

\vspace*{2.0cm}
\title[Marginal Analysis in Optimization with Set-Valued Inclusion constraints]
{Marginal Analysis of Convex Optimization Problems with Set-Valued Inclusion Constraints}
\author[A. Uderzo]{ Amos Uderzo$^{1,*}$ }
\maketitle
\vspace*{-0.6cm}

\begin{center}
{\footnotesize

$^1$Department of Mathematics and Applications, University of Milano-Bicocca, Milano (Italy) \\

}\end{center}

\vskip 4mm {\footnotesize \noindent {\bf Abstract.}
In this paper, stability and sensitivity properties of a class of parametric constrained
optimization problem, whose feasible region is defined
by a set-valued inclusion, are investigated through the associated
optimal value function. Set-valued inclusions are a kind of constraint system,
which naturally emerges in contexts requiring the robust fulfilment
of traditional cone constraints, where data are affected by uncertain elements
having a non stochastic nature, or in (MPEC) as a vector equilibrium constraint,
where feasible solutions are intended as equilibrium point in a strong sense.
Under proper convexity assumptions on the objective function and the
constraining set-valued term, combined with a global qualification
condition, a class of parametric optimization problems is singled out, which
displays a global Lipschitz behaviour. By employing recent results of
variational analysis, elements for a sensitivity analysis of this
class of problems are provided via exact subgradient formulae for the optimal
value function. Further consequences of the stability behaviour are
explored in terms of problem calmness and viability of penalization
techniques.

 \noindent {\bf Keywords.}
Set-valued inclusions; $C$-concave multi-valued mappings;
metric $C$-increase; parametric optimization;  optimal value function;
problem calmness; penalty function.

\noindent {\bf 2020 Mathematics Subject Classification.}
90C30, 90C31, 90C48. }

\renewcommand{\thefootnote}{}
\footnotetext{ $^*$Corresponding author.
\par
E-mail address: amos.uderzo@unimib.it (A. Uderzo).
\par

}

\vskip2cm

\section{Introduction}

The present paper aims at bringing new insights into the study
of the stability and sensitivity properties of parametric optimization
problems with set-valued inclusion constraints. Stability and sensitivity
issues for constrained optimization problems subject to parameter
perturbations lie at the core of a well-recognized and very active
area of variational analysis, often indicated as perturbation
analysis of optimization problems (see \cite{BaGuKlKuTa83,BonSha00,Fiac83,LuMiSa02,Mord18}).
According to a deep-rooted approach to this topic,
the two main objects of study are the \textit{optimal value}
(a.k.a. \textit{marginal} or \textit{performance}) \textit{function}
associated with a class of parametric optimization problems
and the (generally) multi-valued mapping collecting the
optimal solutions (if any) of these problems, when the value of
the parameter varies. Various qualitative and quantitative information
describing how problems are conditioned by and react to changes
in the parameter (e.g. solvability near reference values,
closeness to known values, rate of changes and so on)
can be grasped by analyzing these two objects. The investigations
reported in the present paper focus on the analysis of the
optimal value function, whereas the analysis of the solution
mapping will be left for a future, specifically devoted, project of
research.
According to \cite[Chapter 4.6]{Mord18}
\begin{quote}
{\it it would not be
an exaggeration to say that marginal functions manifest the essence
of modern techniques in variational analysis involving perturbation
and approximation procedures with the subsequent passing to the limit.}
\end{quote}

The distinguishing feature of the optimization problems considered in
the present paper consists in the fact that their constraints are
expressed by parameterized set-valued inclusions. These lead to a constraint system
format different from the generalized equation format typically occurring
in the variational analysis literature, with its own geometry
needing specifically devised tools to be adequately handled.
Historically, to the best of the author's knowledge, the appearance
of such a constraint format can be traced back to \cite{Soys72},
where in order to address inexact linear programming problems feasible
regions were considered, which are defined via "set-containment", so
constraints are called there "set-inclusive". Subsequently, set-valued
inclusions revealed to be a natural language to formalize the fulfilment
of traditional cone constraints in the context of robust optimization,
as it was introduced by A. Ben-Tal and A. Nemirovsky (see \cite{BenNem98,BeGhNe09}).
Besides, they found further contexts of relevant application in vector
optimization, in expressing ideal efficiency (see \cite{Uder24}), and,
more in general, in vector equilibrium theory, in formalizing the concept
of strong solution to vector Ky-Fan inequalities (see \cite{Uder23}).

Let us consider the following class of parametric constrained optimization
problems
\begin{equation*}
  (\POP) \hskip 1.8cm \begin{array}{cl}
    \displaystyle\min_{x\in\X}  & \varphi(p,x) \\
      \hbox{ sub } & F(p,x)\subseteq C,
  \end{array}
\end{equation*}
where $\varphi:\Pb\times\X\longrightarrow\R$ represents the objective
function, $\{\nullv\}\subsetneqq C\subsetneqq\Y$ is a (nontrivial) closed, convex
cone, and $F:\Pb\times\X\rightrightarrows\Y$ is a multi-valued mapping
defining the set-valued inclusion problem, which formalizes the constraint
system of the optimization problems.
Throughout the paper, $(\Pb,\|\cdot\|)$, $(\X,\|\cdot\|)$, and $(\Y,\|\cdot\|)$
denote Banach spaces over the real field $\R$, with null vector $\nullv$.
As should be clear, the variable
$p\in\Pb$ indicates the parameter subject to perturbation, while the variable
$x\in\X$ stands for the problem unknown.
In the above setting, the (generally) multi-valued mapping $\FReg:\Pb\rightrightarrows\X$
that describes the changes of the feasible region of $(\POP)$, when the parameter $p$ varies,
is given by
\begin{equation}
  \FReg(p)=F^{+1}(p,\cdot)(C)=\{x\in X\ |\ F(p,x)\subseteq C\},
\end{equation}
where the notation $F^{+1}$ (borrowed from \cite{AubFra90}) indicates
one of the two possible manners to define the inverse image through the set-valued
mapping $F$ of a given subset of the range space, namely the core
through $F$ of that subset.
The optimal value function $\val:\Pb\longrightarrow\R\cup\{\pm\infty\}$
associated with the class of parametric constrained optimization problems
$(\POP)$ takes the form
\begin{equation}
  \val(p)=\inf_{x\in\FReg(p)}\varphi(p,x),
\end{equation}
while the problem solution set-valued mapping $\Argmin:\Pb\rightrightarrows\X$
is given by
\begin{equation*}
  \Argmin(p)=\{x\in\FReg(p)\ |\ \varphi(p,x)=\val(p)\}.
\end{equation*}
The investigations exposed in the present paper carry on the value analysis
started in \cite{Uder21}, where several calmness properties of the marginal function
$\val$ have been established by employing various sufficient conditions for the local
Lipschitz semicontinuity behaviour of $\FReg$.
Although conducted within the same framework, the present analysis aims nonetheless
at enlightening a different aspect of this topic. The idea triggering the investigations here contained
relies on the expectation that much stronger global properties of $\val$ can be
obtained under suitable convexity assumptions on the problem data, which
in \cite{Uder21} are completely disregarded. As one expects, convexity
alone is not able to ensure those properties of $\val$, so its effects
are studied in synergy with qualification conditions for set-valued inclusions,
which are expressed in terms of $C$-increase behaviour of the set-valued term $F$.
More precisely, it can be shown that convexity and Lipschitz continuity of $\val$
can be achieved by applying a recently established result about the
existence of continuous selection of $\FReg$. This leads to single out
a class of problems $(\POP)$, that will be called "qualified convex optimization
problems with set-valued inclusion constraints", for which either $\val\equiv-\infty$
or $\val$ is convex and locally Lipschitz. The reader should notice that,
in a finite-dimensional setting, this means that $\val$ turns out to be
smooth on $\Pb$, up to a (Lebesgue) residual subset.
Furthermore, elements for the sensitivity analysis of $(\POP)$
are proposed by providing sharp estimates of the subgradient
of $\val$, which are established by means of calculus rules for
the Moreau-Rockafellar subdifferential and coderivative calculus
for multifunctions with convex graph.

The contents of the paper are arranged according to the following
scheme.
Section \ref{Sect:2} collects all the technical tools, mainly coming
from convex analysis and from the theory of set-valued inclusions, needed
in the subsequent analysis.
Section \ref{Sect:3} exposes the main results of the paper, namely Lipschitzian
properties of $\val$ and exact formulae for its subgradients, then illustrates
them through several examples.
In Section \ref{Sect:4} some remarkable consequences of the results
presented in the previous section are discussed, which deal with the
property of problem calmness and the viability of penalization techniques
for solving problems $(\POP)$.

The notations in use throughout the paper are standard. Whenever $(\X,\|\cdot\|)$
is a Banach space, $\X^*$ denotes its topological dual, with $\langle\cdot,\cdot\rangle$
indicating their duality pairing. Given $x\in\X$ and $r\ge 0$, the
closed ball centered at $x$, with radius $r$ is denoted by $\ball{x}{r}$.
In particular, $\ball{\nullv}{1}=\Uball$, while the unit ball in a dual
space is denoted by $\Uball^*$.

Given a subset $S$ of a Banach space, $\inte S$ denotes the topological interior of $S$,
whereas $\cone S$ and $\clco S$ denote the conical hull and the convex closure
of $S$, respectively. Given an element $x$ in the same space, by $\dist{x}{S}=\inf_{z\in S}
\|x-z\|$ the distance of $x$ from $S$ is indicated.
Given two subsets $A$ and $B$ of the same Banach space, the excess of $A$ beyond
$B$ is denoted by $\exc{A}{B}=\sup_{a\in A}\dist{a}{B}$, with $\haus{A}{B}=\max
\{\exc{A}{B},\, \exc{B}{A}\}$ denoting the Pompeiu-Hausdorff distance of $A$ and $B$.
$\Lin(\X,\Y)$ stands for the Banach space of all linear bounded operators
from $\X$ to $\Y$.
The acronyms l.s.c. and u.s.c. stand for lower semicontinuous and upper
semicontinuous, respectively.

Throughout the paper the following standing assumptions are maintained:

\begin{itemize}
  \item[($\textsf{a}_1$)] $F$ takes nonempty and closed values (in particular,
  $\dom F=\Pb\times\X$);

  \item[($\textsf{a}_2$)] $\dom\varphi=\Pb\times\X$;
\end{itemize}

Moreover, whenever considered, the space $\R^n$ is assumed to be equipped with its
usual Euclidean space structure.


\section{Preliminaries}    \label{Sect:2}

\subsection{Elements of convex analysis}

Whenever $\Omega\subseteq\X$ is a nonempty convex subset of
a normed space and $\bar x\in\Omega$, the set
$$
  \Ncone{\Omega}{\bar x}=\{x^*\in\X^*\ |\ \langle x^*,x-\bar x\rangle
  \le0,\quad\forall x\in\Omega\}
$$
is called the {\it normal cone} ({\it in the sense of convex analysis})
to $\Omega$ at $\bar x$.

Given a convex function $\psi:\X\longrightarrow\cup\{\pm\infty\}$
and $\bar x\in\dom\psi$, the {\it subdifferential in the sense of
convex analysis} (a.k.a. {\it Moreau-Rockafellar subdifferential})
of $\psi$ at $\bar x$ is defined by
$$
   \partial\psi(\bar x)=\{x^*\in\X^*\ |\ \langle x^*,x-\bar x\rangle
   \le\psi(x)-\psi(\bar x),\quad\forall x\in\X\}.
$$
It is well known that if $\Omega$ is a closed and convex set, then
the distance function $x\mapsto\dist{x}{\Omega}$ from $\Omega$ is a
(Lipschitz continuous) convex function. In this special case, the two above
dual constructions are linked by the following relationships, which
will be useful in the sequel.

\begin{proposition}[\cite{MorNam22}]    \label{pro:subdnconerel}
Let $\Omega$ be a closed and convex set and let $\bar x\in\Omega$.
The following equalities hold:
\begin{itemize}
  \item[(i)] $\partial\dist{\cdot}{\Omega}(\bar x)=\Ncone{\Omega}{\bar x}\cap\Uball^*$;

  \item[(ii)] $\Ncone{\Omega}{\bar x}=\bigcup_{t\ge 0} t\partial\dist{\cdot}{\Omega}(\bar x)$.
\end{itemize}
\end{proposition}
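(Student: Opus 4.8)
The plan is to prove part (i) by a double-inclusion argument and then to obtain part (ii) almost for free from (i), using the fact that $\Ncone{\Omega}{\bar x}$ is a convex cone. Beyond the two definitions, the only analytic ingredient I need is the elementary observation that $x\mapsto\dist{x}{\Omega}$ is $1$-Lipschitz and vanishes at $\bar x$ (because $\bar x\in\Omega$); thus $\dist{\bar x}{\Omega}=0$, so the subgradient inequality at $\bar x$ reads $\langle x^*,x-\bar x\rangle\le\dist{x}{\Omega}$ for every $x\in\X$, and in addition $\dist{x}{\Omega}\le\|x-\bar x\|$.

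For the inclusion $\partial\dist{\cdot}{\Omega}(\bar x)\subseteq\Ncone{\Omega}{\bar x}\cap\Uball^*$, I would take $x^*\in\partial\dist{\cdot}{\Omega}(\bar x)$ and test the subgradient inequality on two families of points: restricting $x$ to $\Omega$ makes the right-hand side vanish and gives membership in the normal cone, while the Lipschitz bound forces $\langle x^*,h\rangle\le\|h\|$ for every $h\in\X$, hence $\|x^*\|\le1$. The reverse inclusion is the substantive step, and I expect it to be the main (if modest) obstacle, since it is the only place where an actual idea is needed. Given $x^*\in\Ncone{\Omega}{\bar x}$ with $\|x^*\|\le1$ and an arbitrary $x\in\X$, I would split, for each $z\in\Omega$,
\[
  \langle x^*,x-\bar x\rangle=\langle x^*,x-z\rangle+\langle x^*,z-\bar x\rangle\le\|x-z\|,
\]
bounding the first term by $\|x^*\|\,\|x-z\|\le\|x-z\|$ and using that the second term is nonpositive by the normal-cone property (as $z\in\Omega$). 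Passing to the infimum over $z\in\Omega$ then yields $\langle x^*,x-\bar x\rangle\le\dist{x}{\Omega}$, i.e. $x^*\in\partial\dist{\cdot}{\Omega}(\bar x)$.

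With (i) established, part (ii) reduces to the identity $\Ncone{\Omega}{\bar x}=\bigcup_{t\ge0}t\,(\Ncone{\Omega}{\bar x}\cap\Uball^*)$. The inclusion $\supseteq$ is immediate once one notes that $\Ncone{\Omega}{\bar x}$ is stable under multiplication by nonnegative scalars, so $t\,(\Ncone{\Omega}{\bar x}\cap\Uball^*)\subseteq\Ncone{\Omega}{\bar x}$ for every $t\ge0$. For $\subseteq$, I would treat $x^*=\nullv$ separately (it belongs to the union for $t=0$) and, for $x^*\neq\nullv$, normalize by setting $t=\|x^*\|>0$: then $x^*/t\in\Uball^*$ and, again by the cone property, $x^*/t\in\Ncone{\Omega}{\bar x}$, so that $x^*=t\,(x^*/t)$ lies in the union. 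This closes the argument.
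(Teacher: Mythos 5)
Your proof is correct, and it is worth noting that the paper itself offers no argument for this proposition: it is stated as a quotation from the literature (cited to \cite{MorNam22}), so there is no internal proof to compare against. Your double-inclusion argument for (i) is the standard textbook one and is complete: the restriction of the subgradient inequality to $x\in\Omega$ gives membership in $\Ncone{\Omega}{\bar x}$, the $1$-Lipschitz bound $\dist{x}{\Omega}\le\|x-\bar x\|$ gives $\|x^*\|\le 1$, and the splitting $\langle x^*,x-\bar x\rangle=\langle x^*,x-z\rangle+\langle x^*,z-\bar x\rangle$ followed by the infimum over $z\in\Omega$ is exactly the right idea for the reverse inclusion. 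The reduction of (ii) to the scaling identity $\Ncone{\Omega}{\bar x}=\bigcup_{t\ge0}t\,(\Ncone{\Omega}{\bar x}\cap\Uball^*)$, with the normalization $t=\|x^*\|$ and the separate treatment of $x^*=\nullv$, is also sound. One small observation your argument makes visible: since $\bar x\in\Omega$, closedness of $\Omega$ is never used --- convexity alone suffices for both equalities --- so your proof is in fact marginally more general than the statement as quoted.
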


Their are also linked by the indicator function $x\mapsto\ind{x}{\Omega}$, which
is convex iff $\Omega\subseteq\X$ is so, through the formula
\begin{equation}     \label{eq:indsubd}
  \partial\ind{\cdot}{\Omega}(\bar x)=\Ncone{\Omega}{\bar x},
\end{equation}
with $\bar x\in\Omega$ (see, for instance, \cite[Example 3.55]{MorNam22}).

Among the various and well developed calculus rules for subdifferential,
the one concerning the composition with linear mappings, recalled below,
will be of use in the next section: let $\Lambda\in\Lin(\X,\Y)$, let $\bar x\in\X$
and let $\psi:\Y\longrightarrow\R\cup\{\pm\infty\}$ be continuous at $\Lambda\bar x$.
Then, it holds
\begin{equation}  \label{eq:subdlincomp}
   \partial\left(\psi\circ\Lambda\right)(\bar x)=\Lambda^*
   \partial\psi(\Lambda\bar x).
\end{equation}
A proof of the above composition rule can be found, for instance,
in \cite[Theorem 3.55]{MorNam22}.

The further tool needed for the subsequent analysis is a kind of
generalized derivative for convex set-valued mappings, i.e. mappings
whose graph is convex, which can be built graphically via the normal
cone. Let $G:\X\rightrightarrows\Y$ be a convex set-valued
mapping between normed spaces and let $(\bar x,\bar y)\in\graph G$.
The {\it coderivative} of $G$ at $(\bar x,\bar y)$ ({\it in the sense of
convex analysis}) is the set-valued mapping $\Coder{G}{\bar x,\bar y}:\Y^*
\rightrightarrows\X^*$
$$
  \Coder{G}{\bar x,\bar y}(y^*)=\{x^*\in\X^*\ |\ (x^*,-y^*)\in
  \Ncone{\graph G}{(\bar x,\bar y)}\}.
$$
Several advanced calculus rules in convex analysis can be performed
provided that proper qualification conditions are satisfied. Fur the purposes of the
present analysis, the following is needed: a given pair of nonempty
subsets $A$ and $B$ of a normed space is said to be {\it subtransversal}
at $\bar x\in A\cap B$ if there exist positive $\kappa$ and $r$
such that
$$
  \dist{x}{A\cap B}\le\kappa\max\{\dist{x}{A},
  \dist{x}{B}\},\quad\forall x\in\ball{\bar x}{r}.
$$
Such a condition, which generalizes the classical concept of transversality
in mathematical analysis and differential topology in capturing a certain
"good mutual arrangement" of several sets in space, appeared under
different names in different contexts of variational analysis.
It is worth observing that, whenever $A$ and $B$ are closed convex sets,
a sufficient condition for the pair $A$ and $B$ to be subtransversal at
$\bar x\in A\cap B$ is that $\nullv\in\inte(A-B)$. Pairs of polyhedral
convex sets are known to be subtransversal at any intersection point.
For a systematic study of this properties the reader is referred to
\cite{BauBor96,KrLuTh17} and references therein.

\vskip.5cm

\subsection{$C$-Increasing multi-valued mappings}

The property recalled below is employed to guarantee solvability of set-valued
inclusion problems and related error bounds, thereby propitiating stability behaviours
of their solution mappings.

\begin{definition}
Let $G:\X\rightrightarrows\Y$ be a set-valued mapping between vector normed
spaces and let $C\subset\Y$ be a closed convex cone. $G$ is said to be {\it
metrically $C$-increasing} at $x_0\in\X$ if there exist $\alpha>1$ and $\delta>0$
such that
\begin{equation}   \label{in:mCincrdef}
  \forall r\in (0,\delta]\ \exists u\in\ball{x_0}{r}\ |\
  \ball{G(u)}{\alpha r}\subseteq\ball{G(x_0)+C}{r}.
\end{equation}
The value
$$
  \incr{G}{x_0}=\sup\{\alpha>1\ |\ \exists\delta>0
  \hbox{ for which (\ref{in:mCincrdef}) holds}\}
$$
is called {\it exact bound of $C$-increase} of $G$ at $x_0$.
\end{definition}

Connections of the metric $C$-increase property and the decrease
principle (in the sense of Borwein-Clarke-Ledyaev) as well as
with Caristi type conditions are discussed in \cite{Uder22}
and \cite{Uder24}, respectively. Some examples of classes
of metrically $C$-increasing mappings are provided below.

\begin{example}[Rescaled rotations with Lipschitz additive
perturbations]
For any $n\ge 2$, let $\X=\Y=\R^n$ and $C=\R^n_+$.
Denote by $(\SO(n),\circ)$ the special orthogonal group,
consisting of all rotations acting on $\R^n$.
It has been proven that any rescaled rotation
$\lambda O\in\Lin(\R^n,\R^n)$, defined by $\lambda>n$ and $O\in
\SO(n)$, is $\R^n_+$-increasing at each point $x\in\R^n$, with
$$
  \incr{\lambda O}{x}\ge\sqrt{n},\quad\forall x\in\R^n
$$
(see, for more details, \cite[Example 2.2]{Uder24}).
By recalling that the $C$-increase property is preserved under small
additive Lipschitz perturbations (see \cite[Proposition 2.4]{Uder24}),
it is possible to see that, if $H:\R^n\rightrightarrows\R^n$ is
a Lipschitz continuous set-valued mapping, with constant $\ell$ such
that
$$
  \ell<1-\frac{1}{\sqrt{n}},
$$
then the set-valued mapping $\lambda O+H$ turns out to be
$\R^n_+$-increasing at each point $x\in\R^n$ with
$$
  \incr{\lambda O+H}{x}\ge (1-\ell)\sqrt{n}.
$$
\end{example}

The next example points out connections of the metric $C$-increase
property with openness (a.k.a. covering) at a linear rate, whose phenomenology
has been the subject of intensive investigations in variational analysis.

\begin{example}[Compactly generated fans]    \label{ex:compgenfan}
Given a nonempty, convex and compact subset of linear mappings
${\mathcal G}\subset\Lin(\R^n,\R^m)$, the set-valued mapping
$H_{\mathcal G}:\R^n\rightrightarrows\R^m$ defined as
$$
  H_{\mathcal G}(x)=\{\Lambda x\ |\ \Lambda\in {\mathcal G}\}
$$
is known to fulfil the following properties:
\begin{itemize}
  \item[(i)]  $H_{\mathcal G}(t x)=t H_{\mathcal G}(x),
  \quad\forall t>0,\ \forall x\in\R^n$;

  \item[(ii)] $H_{\mathcal G}(x)$ is nonempty, closed and convex for every $x\in\R^n$;

  \item[(iii)] $H_{\mathcal G}(x_1+x_2)\subseteq H_{\mathcal G}(x_1)+
  H_{\mathcal G}(x_2),\quad\forall x_1,\, x_2\in\R^n$.
\end{itemize}
Therefore $H_{\mathcal G}$ falls in the class of those set-valued mappings
which are called {\it fans} after \cite{Ioff81}.
Let $C\subseteq\R^m$ be a closed, convex and pointed cone.
In \cite[Proposition 2.15 + Corollary 2.18]{Uder22} it has been proven that
if
$$
  \inf_{\Lambda\in{\mathcal G}}\sur{\Lambda}=\eta_{\mathcal G}>0
$$
and
$$
  \inte\left(\bigcap_{\Lambda\in{\mathcal G}}\Lambda^{-1}(C)\right)
  \ne\varnothing,
$$
then $H_{\mathcal G}$ is metrically $C$-increasing at each point
$x\in\R^n$ with $\incr{H_{\mathcal G}}{x}\ge\eta_{\mathcal G}+1$.
Here $\sur{\Lambda}$ denotes the {\it exact bound of open covering}
of the linear mapping $\Lambda$, which provides a sharp estimate
of its surjectivity behaviour as follows
$$
  \sur{\Lambda}=\sup\{\eta>0\ |\ \Lambda\Uball\supseteq\eta\Uball\}.
$$
Such kind of bound was fully investigated at the early stage of the
theory of metric regularity and the following characterization in terms
of the Banach constant of the adjoint mapping $\Lambda^*$ to $\Lambda$
was soon understood to hold true
$$
  \sur{\Lambda}=\dist{\nullv}{\Lambda^*\Uball^*}=\inf_{\|y^*\|=1}
  \|\Lambda^*y^*\|
$$
(see, for instance, \cite[Corollary 1.58]{Mord06}).
The reader should also notice that any set-valued mapping such
as $H_{\mathcal G}$ takes compact values and turns out to be Lipschitz
continuous with respect to the Pompeiu-Hausdorff distance, i.e. there
exists $\ell\ge 0$ such that
$$
  \haus{H_{\mathcal G}(x_1)}{H_{\mathcal G}(x_2)}\le\ell\|x_1-x_2\|,
  \quad\forall x_1,\, x_2\in\X
$$
(see, for more details, \cite[Remark 2.14(iii)]{Uder22}).
\end{example}

\vskip.5cm


\subsection{$C$-concave multi-valued mappings}

Set-valued mappings may exhibit different forms of convexity property, the
mostly employed in variational analysis being the convexity with reference
to the graph. Nevertheless, in connection with the theory of set-valued
inclusions, the following concept of concavity (which can not be of graphical
nature) reveals to be useful.

\begin{definition}[$C$-concavity]
Let $C\subseteq\Y$ be a convex cone. A set-valued mapping $G:\X\rightrightarrows\Y$
is said to be $C$-concave on $A\subseteq\X$ is for every $x_1,\, x_2\in A$ and
$t\in [0,1]$ it holds
$$
  G(tx_1+(1-t)x_2)\subseteq tG(x_1)+(1-t)G(x_2)+C.
$$
\end{definition}

\begin{example}    \label{ex:Cconcgomega}
Let $C\subseteq\Y$ be a convex cone. Let us recall that,
following \cite{FreKas99}, a single-valued mapping $g:\X\longrightarrow\Y$
between vector spaces is said to be $C$-concave if
$$
   g(tx_1+(1-t)x_2)\in tg(x_1)+(1-t)g(x_2)+C,
   \quad\forall x_1,\, x_2\in\X, \ \forall t\in [0,1].
$$
For instance, it is well known that, whenever $\Y=\R^m$, $C=\R^m_+$,
and $g=(g_1,\dots,g_m)$ is defined by scalar functions $g_i:\X\longrightarrow\R$
which are concave for every $i=1,\dots,m$, then $g$ turns out to be
$\R^m_+$-concave.

Now, assume that $\Omega$ is a nonempty set and $g:\X\times\Omega\longrightarrow\Y$
is a given mapping such that $g(\cdot,\omega):\X\longrightarrow\Y$  is
$C$-concave for every $\omega\in\Omega$. Then, according to
\cite[Example 4.4]{Uder21}, the set-valued mapping $G_{g,\Omega}:\X\rightrightarrows\Y$
defined by
$$
   G_{g,\Omega}(x)=g(x,\Omega)=\{g(x,\omega)\ |\ \omega\in\Omega\}
$$
is $C$-concave.
In particular, if taking $\Omega=C$ and $g:\X\times C\longrightarrow\Y$
given by
$$
   g(x,y)=h(x)+y,
$$
where $h:\X\longrightarrow\Y$ is $C$-concave, then
the resulting set-valued mapping $G_{h,C}=h+C$ is $C$-concave.
It is also worth noticing that, if taking $\Omega={\mathcal G}
\subseteq\Lin(\R^n,\R^m)$ as in Example \ref{ex:compgenfan} and
$g:\R^n\times {\mathcal G}\longrightarrow\R^m$ given by
$$
  g(x,\Lambda)=\Lambda x,
$$
by linearity of $\Lambda$ one obtains that any compactly generated
fan is $C$-concave with respect to any cone $C\subseteq\R^m$.
\end{example}

\begin{example}[Separable fans]
Let $\mathcal{A}=\{A_1,\dots,A_n\}$, where $A_i\subseteq\R^n$
is a nonempty, convex and compact set for every $i=1,\dots,n$.
Then, consider the set-valued mapping $H_\mathcal{A}:\R^n\rightrightarrows\R^n$
defined as being
$$
  H_\mathcal{A}(x)=\sum_{i=1}^{n}A_ix_i.
$$
It is readily seen that $H_\mathcal{A}$ takes nonempty, convex compact
values and it holds
$$
  H_\mathcal{A}(tx)=\sum_{i=1}^{n}A_i tx_i=t H_\mathcal{A}(x),\quad
  \forall t>0,\, x\in\R^n,
$$
and
\begin{eqnarray*}
  H_\mathcal{A}(x+z)&=&\sum_{i=1}^{n}A_i(x_i+z_i)\subseteq
  \sum_{i=1}^{n}(A_ix_i+A_iz_i)=\sum_{i=1}^{n}A_ix_i+\sum_{i=1}^{n}A_iz_i  \\
  &=& H_\mathcal{A}(x)+H_\mathcal{A}(z),\quad\forall x,\, z\in\R^n.
\end{eqnarray*}
This means that $H_\mathcal{A}$ is a fan, so it is $C$-concave with respect
to any convex cone $C\subseteq\R^n$.

To the best of the author's knowledge, such kind of multi-valued mappings
was introduced in \cite{Soys72} in the context of convex optimization,
when the terminology of "fan" was not yet wide-spread in the related literature.
\end{example}

The next proposition makes clear the role of $C$-concavity in ensuring
convexity properties for the solution mapping associated with set-valued
inclusion problems.

\begin{proposition}   \label{pro:FRegconv}
With reference to the constraint system of problems ($\POP$), if
$F:\Pb\times\X\rightrightarrows\Y$ is $C$-concave, then $\FReg:\Pb
\rightrightarrows\X$ is a convex set-valued mapping.
\end{proposition}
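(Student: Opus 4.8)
The plan is to unwind the definition of convexity for the set-valued mapping $\FReg$ --- namely convexity of its graph --- and to reduce the assertion to a direct containment computation that exploits the $C$-concavity of $F$ together with the cone structure of $C$. First I would recall that
$$
  \graph\FReg=\{(p,x)\in\Pb\times\X\ |\ F(p,x)\subseteq C\},
$$
so that, by definition of convex set-valued mapping, the claim amounts precisely to showing that this subset of $\Pb\times\X$ is convex.

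To this end I would fix two feasible pairs $(p_1,x_1),(p_2,x_2)\in\graph\FReg$, meaning $F(p_1,x_1)\subseteq C$ and $F(p_2,x_2)\subseteq C$, together with a scalar $t\in[0,1]$. The goal is to verify that the convex combination $(tp_1+(1-t)p_2,\,tx_1+(1-t)x_2)$ again belongs to $\graph\FReg$, i.e.\ that $F(tp_1+(1-t)p_2,\,tx_1+(1-t)x_2)\subseteq C$. Applying the $C$-concavity of $F$ --- invoked on the product space $\Pb\times\X$, at the two points $(p_1,x_1)$ and $(p_2,x_2)$ --- gives
$$
  F\bigl(t(p_1,x_1)+(1-t)(p_2,x_2)\bigr)\subseteq tF(p_1,x_1)+(1-t)F(p_2,x_2)+C,
$$
so it suffices to control the right-hand side.

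The remaining step is pure cone arithmetic. Since $C$ is a convex cone, it is closed under nonnegative scalar multiplication, whence $tF(p_1,x_1)\subseteq tC\subseteq C$ and $(1-t)F(p_2,x_2)\subseteq(1-t)C\subseteq C$; and, being convex, $C$ satisfies $C+C\subseteq C$. Chaining these containments yields
$$
  tF(p_1,x_1)+(1-t)F(p_2,x_2)+C\subseteq C+C+C\subseteq C,
$$
and therefore $F(tp_1+(1-t)p_2,\,tx_1+(1-t)x_2)\subseteq C$, as required. I do not anticipate a genuine obstacle here: the only point demanding a moment's care is the elementary identity $C+C\subseteq C$, which follows by combining convexity (to place $\tfrac12(c_1+c_2)$ in $C$ for $c_1,c_2\in C$) with the cone property (to rescale by the factor $2$), and the cautionary observation that the $C$-concavity hypothesis must be read jointly in both variables $(p,x)$ rather than separately.
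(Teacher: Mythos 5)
Your proof is correct and follows essentially the same route as the paper: both apply the $C$-concavity of $F$ jointly in the pair $(p,x)$ and then conclude by the cone arithmetic $tC+(1-t)C+C\subseteq C$. The only cosmetic difference is that you verify convexity of $\graph\FReg$ directly, whereas the paper establishes the equivalent inclusion $t\FReg(p_1)+(1-t)\FReg(p_2)\subseteq\FReg(tp_1+(1-t)p_2)$ and disposes of empty values via the convention $S+\varnothing=\varnothing$ --- a case your graph formulation handles automatically.
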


\begin{proof}
Let $p_1,\, p_2\in\dom\FReg$ and $t\in [0,1]$. For arbitrary $x_1\in\FReg(p_1)$
and $x_2\in\FReg(p_2)$ it holds
$$
  F(p_1,x_1)\subseteq C \quad \hbox{ and }\quad  F(p_2,x_2)\subseteq C,
$$
whence, by $C$-concavity of $F$ it follows
\begin{eqnarray*}
  F(tp_1+(1-t)p_2,tx_1+(1-t)x_2) &=& F(t(p_1,x_1)+(1-t)(p_2,x_2)) \\
   &\subseteq & tF(p_1,x_1)+(1-t)F(p_2,x_2)+C \\
   &\subseteq & tC+(1-t)C+C=C.
\end{eqnarray*}
By arbitrariness of $x_1\in\FReg(p_1)$ and $x_2\in\FReg(p_2)$, the above
inclusion shows that
$$
  t\FReg(p_1)+(1-t)\FReg(p_2)\subseteq\FReg(tp_1+(1-t)p_2).
$$
If $p_1$ or $ p_2$ is not in $\dom\FReg$, the convention $S+\varnothing=\varnothing$
for every set $S$ makes the last inclusion still valid.
\end{proof}

In what follows, a subset $S\subseteq\Y$ is said to be $C$-bounded
if the set $S\backslash C$ is (metrically) bounded.
The combination of global metric $C$-increase and $C$-concavity of $F$,
along with some technical assumptions,
yields global solvability and continuous parameter dependence for
the solutions to parameterized set-valued inclusion problems.
This fact is stated in the next result, whose formulation requires to
define the following problem constant
\begin{equation}\label{eq:defmCincreb}
  \alpha_F=\inf\left\{\incr{F(p,\cdot)}{x}\ |\ (p,x)\in\Pb\times\X,\
  F(p,x)\nsubseteq C\right\}.
\end{equation}

\begin{theorem}[Continuous selection and global error bound]    \label{thm:cselectFreg}
With reference to the constraint system of problems ($\POP$), suppose that:
\begin{itemize}
  \item[(i)] $\forall p\in\Pb$ $\exists\hat x_p\in\X$ such that
  $F(p,\hat x_p)$ is $C$-bounded;

  \item[(ii)] $F(p,\cdot):\X\rightrightarrows\Y$ is l.s.c. on $\X$, $\forall p\in\Pb$ ;

  \item[(iii)]$F(p,\cdot):\X\rightrightarrows\Y$ is $C$-concave on $\X$, $\forall p\in\Pb$ ;

  \item[(iv)] $F(p,\cdot):\X\rightrightarrows\Y$ is Hausdorff $C$-u.s.c.
  on $\Pb$, $\forall x\in\X$;

  \item[(v)] it holds $\alpha_F>1$.
\end{itemize}
Then, $\dom\FReg=\Pb$ and $\FReg:\Pb\rightrightarrows\X$ admits a continuous selection.
Moreover, the following estimate holds for any $\alpha\in (1,\alpha_F)$
\begin{equation}    \label{in:glerbosvi}
  \dist{x}{\FReg(p)}\le\frac{\exc{F(p,x)}{C}}{\alpha-1},\quad
  \forall (p,x)\in\Pb\times\X.
\end{equation}
\end{theorem}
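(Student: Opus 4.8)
The plan is to fix $p\in\Pb$ and establish, for that parameter, the nonemptiness of $\FReg(p)$ and the error bound (\ref{in:glerbosvi}); the continuous selection is then assembled globally. Write $e_p(x)=\exc{F(p,x)}{C}$. The core is a local decrease lemma: if $e_p(x)>0$ and $\alpha\in(1,\alpha_F)$, then, since $x$ is infeasible and $\incr{F(p,\cdot)}{x}\ge\alpha_F>\alpha$, (\ref{in:mCincrdef}) supplies for all small $r>0$ a point $u\in\ball{x}{r}$ with $\ball{F(p,u)}{\alpha r}\subseteq\ball{F(p,x)+C}{r}$. Given $y\in F(p,u)$ with $\dist{y}{C}>0$ (points of $F(p,u)$ lying in $C$ contribute nothing to the excess), I would take a supporting functional of $C$ at $y$, i.e. $y^*\in\Uball^*$ with $\langle y^*,c\rangle\le 0$ for all $c\in C$ and $\langle y^*,y\rangle=\dist{y}{C}$, together with a near-norming vector $e\in\Y$ of $y^*$. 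Then $y+\alpha r e\in\ball{F(p,u)}{\alpha r}\subseteq F(p,x)+C+r\Uball$, so writing $y+\alpha r e=w+c+b$ with $w\in F(p,x)$, $c\in C$, $\|b\|\le r$ and testing against $y^*$ gives $\dist{y}{C}+\alpha r\le\langle y^*,w\rangle+\|b\|\le e_p(x)+r$. Hence $\dist{y}{C}\le e_p(x)-(\alpha-1)r$ for every such $y$, i.e. $e_p(u)\le e_p(x)-(\alpha-1)r$.

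To globalize, the cleanest route is the decrease principle realized through Ekeland's variational principle applied to $x\mapsto e_p(x)$, which is lower semicontinuous by (ii). Fixing $x_0$ with $e_p(x_0)<\infty$ and $\alpha\in(1,\alpha_F)$, Ekeland's principle (with $\varepsilon=e_p(x_0)$ and $\lambda=e_p(x_0)/(\alpha-1)$) yields $x_*$ with $\|x_*-x_0\|\le e_p(x_0)/(\alpha-1)$ and $e_p(x_*)<e_p(x)+(\alpha-1)\|x-x_*\|$ for every $x\ne x_*$. Were $e_p(x_*)>0$, the decrease lemma at $x_*$ would furnish $u\ne x_*$ with $\|u-x_*\|\le r$ and $e_p(u)\le e_p(x_*)-(\alpha-1)r\le e_p(x_*)-(\alpha-1)\|u-x_*\|$, contradicting the Ekeland inequality. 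Thus $e_p(x_*)=0$, i.e. $x_*\in\FReg(p)$, and this proves $\dist{x_0}{\FReg(p)}\le e_p(x_0)/(\alpha-1)$, which is (\ref{in:glerbosvi}) (trivial when $e_p(x_0)=\infty$). Taking $x_0=\hat x_p$ from (i), where $e_p(\hat x_p)<\infty$ by $C$-boundedness, gives $\FReg(p)\ne\varnothing$; since $p$ was arbitrary, $\dom\FReg=\Pb$.

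For the selection I would check the hypotheses of the classical Michael selection theorem for $\FReg:\Pb\rightrightarrows\X$. The values are convex by (iii) and Proposition \ref{pro:FRegconv}, nonempty by the previous paragraph, and closed because $x\mapsto e_p(x)$ is lower semicontinuous and $\FReg(p)=\{x\in\X\ |\ e_p(x)\le 0\}$ is one of its sublevel sets. The decisive remaining property is lower semicontinuity of $p\mapsto\FReg(p)$, which I would deduce from the error bound. Fix $\bar p$ and $\bar x\in\FReg(\bar p)$, so that $F(\bar p,\bar x)\subseteq C$ and hence $F(\bar p,\bar x)+C\subseteq C$. Subadditivity of the excess gives $e_p(\bar x)\le\exc{F(p,\bar x)}{F(\bar p,\bar x)+C}+\exc{F(\bar p,\bar x)+C}{C}$, where the last term is $0$; the first term tends to $0$ as $p\to\bar p$ by the Hausdorff $C$-upper semicontinuity (iv). Therefore (\ref{in:glerbosvi}) yields $\dist{\bar x}{\FReg(p)}\le e_p(\bar x)/(\alpha-1)\to 0$, which is exactly lower semicontinuity of $\FReg$ at $\bar p$.

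Since $\Pb$ is metric, hence paracompact, $\X$ is a Banach space, and $\FReg$ is lower semicontinuous with nonempty closed convex values, Michael's theorem produces a continuous selection of $\FReg$, completing the argument. I expect the conceptual crux to be the decrease lemma together with its globalization: extracting the sharp per-point inequality $e_p(u)\le e_p(x)-(\alpha-1)r$ from (\ref{in:mCincrdef}) relies on the supporting-functional computation above (with the near-norming vector responsible for harmless $\varepsilon$-losses in a general Banach space), while converting it into the exact constant $1/(\alpha-1)$ forces the Ekeland/decrease-principle formulation rather than a naive iteration — the point-dependence of $\delta$ in (\ref{in:mCincrdef}) being precisely what rules out a single-step jump to feasibility. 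Verifying that condition (iv) delivers $\exc{F(p,\bar x)}{F(\bar p,\bar x)+C}\to 0$ in the intended sense is the only other place demanding care.
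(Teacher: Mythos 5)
Your argument is correct, but it takes a genuinely different route from the paper's — for the simple reason that the paper does not really prove this theorem at all: its proof consists of observing that $(\Pb,\|\cdot\|)$, being a metric space, is paracompact, and then citing \cite[Theorem 3.4]{Uder24} for $\dom\FReg=\Pb$ and the continuous selection, and \cite[Proposition 3.1]{Uder24} for the estimate (\ref{in:glerbosvi}). What you have produced is a self-contained reconstruction of the machinery those citations hide: the supporting-functional computation that converts the metric $C$-increase inclusion (\ref{in:mCincrdef}) into the pointwise decrease of the excess $e_p(x)=\exc{F(p,x)}{C}$; its globalization by Ekeland's variational principle (legitimate, since $e_p$ is l.s.c. by (ii), nonnegative, and $\X$ is complete), giving the error bound with the exact constant $(\alpha-1)^{-1}$; the derivation of lower semicontinuity of $\FReg$ from that error bound combined with hypothesis (iv); and finally Michael's selection theorem, which is precisely where the paracompactness of $\Pb$ — the one ingredient the paper's proof does mention — enters. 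Your proof also makes visible something the paper only asserts, namely that (\ref{in:glerbosvi}) uses neither the $C$-concavity hypothesis (iii) nor the $C$-u.s.c. hypothesis (iv); this matches the paper's remark that the estimate holds ``under more general assumptions.'' Two small repairs are worth recording. First, Proposition \ref{pro:FRegconv} assumes \emph{joint} $C$-concavity of $F$ and concerns convexity of $\graph\FReg$, whereas (iii) only gives $C$-concavity of $F(p,\cdot)$ at fixed $p$; what Michael's theorem needs is convexity of each value $\FReg(p)$, which follows by running the one-line computation of that proposition with $p$ frozen, so you should invoke that argument rather than the proposition itself. Second, your decrease inequality $e_p(u)\le e_p(x)-(\alpha-1)r$ is only guaranteed in the form $e_p(u)\le\max\{0,\,e_p(x)-(\alpha-1)r\}$, since the supporting-functional argument says nothing about the degenerate case $F(p,u)\subseteq C$; hence in the Ekeland contradiction you must choose $r\le\min\{\delta,\,e_p(x_*)/(\alpha-1)\}$, which is always possible because $e_p(x_*)>0$ there, so the contradiction survives intact.
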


\begin{proof}
It suffices to observe that,
as a metric space, $(\Pb,\|\cdot\|)$ is a paracompact topological space,
and then to apply \cite[Theorem 3.4]{Uder24}.
The estimate in (\ref{in:glerbosvi}) is actually valid under more general assumptions
by virtue of \cite[Proposition 3.1]{Uder24}.
\end{proof}

\vskip1cm


\section{Stability and sensitivity results}     \label{Sect:3}

By exploiting the analysis tools recalled in the previous section,
it is possible to establish several properties of $\val$ of both qualitative
and quantitative interest in studying stability and sensitivity issues
with reference to ($\POP$).
Let us start with the convexity behaviour of $\val$, which is a straightforward
consequence of a well-known phenomenon in parametric constrained optimization,
occurring whenever a convex objective function is to be minimized
over a parameter dependent feasible region, which is a convex multifunction
of the parameter (see, for instance, \cite[Theorem 2.129]{MorNam22}).
The proof is presented here in full details for the sake of completeness.

\begin{proposition}[Convexity of $\val$]    \label{pro:convval}
Given a family of problems ($\POP$), suppose that:
\begin{itemize}
  \item[(i)] $\varphi:\Pb\times\X\longrightarrow\R$ is convex;

  \item[(ii)] $F:\Pb\times\X\rightrightarrows\Y$ is $C$-concave.
\end{itemize}
Then, $\val:\Pb\longrightarrow\R\cup\{\pm\infty\}$ is convex.
\end{proposition}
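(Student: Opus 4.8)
The plan is to reduce the statement to two facts: that the feasible-region multifunction $\FReg$ is convex, and that minimizing a jointly convex objective over a convex multifunction yields a convex marginal function. The first is precisely Proposition \ref{pro:FRegconv}, which under hypothesis (ii) guarantees that $\graph\FReg$ is convex, equivalently that $t\FReg(p_1)+(1-t)\FReg(p_2)\subseteq\FReg(tp_1+(1-t)p_2)$ for all $p_1,p_2\in\Pb$ and $t\in[0,1]$. The second I would establish directly, rather than merely quoting \cite[Theorem 2.129]{MorNam22}, by verifying that $\epi\val$ is a convex subset of $\Pb\times\R$; recall that an extended-real-valued function is convex exactly when its epigraph is convex, and this is the formulation that lets the argument absorb the values $\pm\infty$ cleanly.

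First I would fix $(p_1,\mu_1),(p_2,\mu_2)\in\epi\val$ and $t\in[0,1]$, and set $p_t=tp_1+(1-t)p_2$. Since $\val(p_i)\le\mu_i<+\infty$, each region $\FReg(p_i)$ is nonempty, so for every $\varepsilon>0$ the definition of $\val$ as an infimum supplies feasible points $x_i\in\FReg(p_i)$ with $\varphi(p_i,x_i)<\mu_i+\varepsilon$. Setting $x_t=tx_1+(1-t)x_2$, the convexity of $\FReg$ forces $x_t\in\FReg(p_t)$, so $x_t$ is admissible for the problem at $p_t$. Using that $(p_t,x_t)=t(p_1,x_1)+(1-t)(p_2,x_2)$ together with the joint convexity of $\varphi$ on $\Pb\times\X$ granted by hypothesis (i), one gets
\[
  \val(p_t)\le\varphi(p_t,x_t)\le t\varphi(p_1,x_1)+(1-t)\varphi(p_2,x_2)<t\mu_1+(1-t)\mu_2+\varepsilon ,
\]
and letting $\varepsilon\downarrow 0$ yields $(p_t,t\mu_1+(1-t)\mu_2)\in\epi\val$. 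Hence $\epi\val$ is convex.

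The step that needs the most care is the bookkeeping with the extended-real values, not the inequalities. If one argues through the raw inequality $\val(p_t)\le t\val(p_1)+(1-t)\val(p_2)$, the case $\val(p_1)=+\infty$, $\val(p_2)=-\infty$ produces the indeterminate form $+\infty-\infty$; phrasing everything through $\epi\val$ removes this obstacle, since an empty region $\FReg(p_i)$ contributes no point $(p_i,\mu_i)$ to the epigraph and the convexity requirement becomes vacuous for such a pair. An equivalent and slightly slicker route, which I would mention, is to write $\val(p)=\inf_{x\in\X}\psi(p,x)$ with $\psi=\varphi+\ind{\cdot}{\graph\FReg}$: here $\psi$ is convex, being the sum of the convex $\varphi$ (finite everywhere in view of standing assumption $(\textsf{a}_2)$) and the indicator of the convex set $\graph\FReg$, and the convexity of $\val$ then follows from the general principle that the inf-projection of a convex function is convex.
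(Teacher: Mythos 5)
Your proposal is correct, and its first half coincides with the paper's: both arguments funnel everything through Proposition \ref{pro:FRegconv}, i.e.\ the inclusion $t\FReg(p_1)+(1-t)\FReg(p_2)\subseteq\FReg(tp_1+(1-t)p_2)$. Where you diverge is in how convexity of $\val$ is then certified. The paper works with the defining inequality $\val(p_t)\le t\val(p_1)+(1-t)\val(p_2)$ and splits into three cases according to whether $\val(p_t)$ is finite, $+\infty$, or $-\infty$: in the finite case it runs the same chain of inequalities you use (pass from the infimum over $\FReg(p_t)$ to the infimum over $t\FReg(p_1)+(1-t)\FReg(p_2)$, then apply joint convexity of $\varphi$), and in the $+\infty$ case it argues that $\FReg(p_t)=\varnothing$ forces $\FReg(p_1)=\FReg(p_2)=\varnothing$. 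You instead verify convexity of $\epi\val$ via an $\varepsilon$-argument (or, equivalently, the inf-projection representation $\val(p)=\inf_{x\in\X}[\varphi(p,x)+\ind{(p,x)}{\graph\FReg}]$, which is exactly the mechanism behind the result of Mordukhovich--Nam that the paper cites but chooses not to invoke). Your route buys a cleaner treatment of the extended-real values, and in fact it quietly repairs a small imprecision in the paper's $+\infty$ case: for $t\in(0,1)$, emptiness of $t\FReg(p_1)+(1-t)\FReg(p_2)$ only forces \emph{one} of $\FReg(p_1)$, $\FReg(p_2)$ to be empty, and if the other happens to carry the value $-\infty$, the right-hand side of the raw inequality is the indeterminate form $+\infty-\infty$; in the epigraph formulation such a pair simply contributes nothing to check, exactly as you point out. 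The paper's route, in exchange, displays the quantitative chain of inequalities explicitly and never needs to introduce approximate minimizers. Both are complete proofs; yours is marginally more robust at the edge cases.
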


\begin{proof}
Take arbitrary $p_1,\, p_2\in\Pb$ and $t\in [0,1]$ and suppose first that
$tp_1+(1-t)p_2\in\dom\val$.
Then, it must be $\FReg(tp_1+(1-t)p_2)\ne\varnothing$. By combining the convexity
of the set-valued mapping $\FReg$ established in Proposition \ref{pro:FRegconv}
with the convexity of $\varphi$, one finds
\begin{eqnarray*}
  \val(tp_1+(1-t)p_2) &=& \inf_{x\in\FReg(tp_1+(1-t)p_2)} \varphi(tp_1+(1-t)p_2,x) \\
   &\le & \inf_{x\in\, [t\FReg(p_1)+(1-t)\FReg(p_2)]} \varphi(tp_1+(1-t)p_2,x) \\
   &=& \inf_{x_1\in\FReg(p_1) \atop x_2\in\FReg(p_2)} \varphi(tp_1+(1-t)p_2,tx_1+(1-t)x_2) \\
   &\le&  \inf_{x_1\in\FReg(p_1) \atop x_2\in\FReg(p_2)} [t\varphi(p_1,x_1)+(1-t)\varphi(p_2,x_2)] \\
   &=& t\val(p_1)+(1-t)\val(p_2).
\end{eqnarray*}
In the case $\val(tp_1+(1-t)p_2)=+\infty$, by virtue of ($\textsf{a}_2$) it must
$\FReg(tp_1+(1-t)p_2)=\varnothing$. This fact by convexity of $\FReg$ implies that $\FReg(p_1)=\FReg(p_2)=
\varnothing$ and hence $\val(p_1)=\val(p_2)=+\infty$, so one obtains
$$
  \val(tp_1+(1-t)p_2)=+\infty=t\val(p_1)+(1-t)\val(p_2).
$$
In the last case, in which $\val(tp_1+(1-t)p_2)=-\infty$, nothing is left to prove, so
the proof is complete.
\end{proof}

Under proper qualification conditions on the set-valued inclusion, which formalizes the constraint
system of problems ($\POP$), the convexity of $\val$ entails the local Lipschitz continuity property,
which is much stronger than mere continuity appearing in Berge's type theorems (see \cite{AliBor06}),
as well as than those forms of calmness established in \cite{Uder21}.

\begin{theorem}[Local Lipschitz continuity of $\val$]    \label{thm:lLipval}
With reference to the family of problems ($\POP$), suppose that:
\begin{itemize}
 \item[(i)] $\varphi:\Pb\times\X\longrightarrow\R$ is convex and continuous;

  \item[(ii)] $\exists\hat x_p\in\X:\ $ $F(p,\hat x_p)$ is $C$-bounded, $\forall p\in\Pb$ ;

  \item[(iii)] $F(p,\cdot):\X\rightrightarrows\Y$ is l.s.c. on $\X$, $\forall p\in\Pb$;

  \item[(iv)] $F:\Pb\times\X\rightrightarrows\Y$ is $C$-concave;

  \item[(v)] $F(\cdot,x):\Pb\rightrightarrows\Y$ is Hausdorff $C$-u.s.c. on $\Pb$, $\forall x\in\X$;

  \item[(vi)] it holds $\alpha_F>1$.
\end{itemize}
Then, either $\val=-\infty$ or $\dom\val=\Pb$ and $\val$ is locally
Lipschitz around each point $p\in\Pb$.
\end{theorem}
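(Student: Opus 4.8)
The plan is to combine the convexity of $\val$ that is already at our disposal with the continuous selection of the feasible set-valued mapping furnished by Theorem \ref{thm:cselectFreg}, the latter being the device that upgrades mere convexity to the local Lipschitz property. First I would check that hypotheses (ii)--(vi) reproduce exactly the assumptions of Theorem \ref{thm:cselectFreg}: in particular, the joint $C$-concavity of $F$ postulated in (iv) entails the $C$-concavity of each partial mapping $F(p,\cdot)$, which is what that theorem requires. Applying it yields $\dom\FReg=\Pb$, a continuous selection $\sel:\Pb\longrightarrow\X$ with $\sel(p)\in\FReg(p)$ for every $p\in\Pb$, and the global error bound (\ref{in:glerbosvi}).

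Next I would exploit the selection to control $\val$ from above. Since $\sel(p)$ is feasible and, by ($\textsf{a}_2$), $\varphi$ is finite everywhere, one has
$$
  \val(p)\le\varphi(p,\sel(p))<+\infty,\qquad\forall p\in\Pb,
$$
so $\val$ never attains $+\infty$ and $\dom\val=\Pb$. On the other hand, Proposition \ref{pro:convval} (applicable because $\varphi$ is convex and $F$ is $C$-concave) guarantees that $\val$ is convex. A convex function that stays everywhere below $+\infty$ obeys the following dichotomy: if $\val(p_0)=-\infty$ at some $p_0$, then for an arbitrary $q\in\Pb$ the identity $q=\frac12 p_0+\frac12(2q-p_0)$ together with convexity gives $\val(q)\le\frac12\val(p_0)+\frac12\val(2q-p_0)=-\infty$, whence $\val\equiv-\infty$; this is the first alternative. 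Otherwise $\val$ is finite at every point of $\Pb$.

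It remains to treat the finite-valued case and to establish the local Lipschitz property around an arbitrary $\bar p\in\Pb$. Here the decisive point, and the step I expect to be the main obstacle, is that a finite-valued convex function on a Banach space need not be continuous, so convexity and finiteness alone are insufficient; the missing ingredient is a local upper bound, which is precisely what the continuous selection delivers. Indeed, the map $p\mapsto\varphi(p,\sel(p))$ is continuous, being the composition of the continuous $\varphi$ with the continuous $\sel$, hence bounded above by some $M$ on a ball $\oball{\bar p}{r}$; consequently $\val\le M$ on $\oball{\bar p}{r}$. A convex function that is bounded above on a neighbourhood of a point at which it is finite is automatically bounded below there as well (via the symmetry estimate $\val(\bar p)\le\frac12\val(x)+\frac12\val(2\bar p-x)$), and is therefore locally Lipschitz around that point by the classical continuity theory of convex functions. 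Applying this at every $\bar p\in\Pb$ completes the argument; what makes the whole scheme work is that one must resist concluding continuity from finiteness and instead extract the local majorant from the selection, which is where all the structural hypotheses (especially $\alpha_F>1$ and $C$-concavity, funnelled through Theorem \ref{thm:cselectFreg}) are genuinely used.
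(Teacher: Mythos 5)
Your proposal is correct and follows essentially the same route as the paper's proof: convexity via Proposition \ref{pro:convval}, the continuous selection from Theorem \ref{thm:cselectFreg} to get $\val(p)\le\varphi(p,\sel(p))<+\infty$ and a local upper bound, and then the classical fact that a convex function bounded above near a point is locally Lipschitz there (the paper cites \cite[Theorem 2.2.9]{Zali02} for this, and \cite[Proposition 2.1.4]{Zali02} for the $-\infty$ dichotomy that you instead verify by the elementary midpoint argument). The only differences are that you prove inline the two convex-analysis facts the paper quotes, which is a presentational rather than structural deviation.
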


\begin{proof}
By Proposition \ref{pro:convval}, $\val:\Pb\longrightarrow\R\cup\{\pm\infty\}$
is convex. Notice that, as a consequence of assumption (iv), each set-valued mapping
$F(p,\cdot)$ is $C$-concave, for every $p\in\Pb$.
Thus, all hypotheses being satisfied, Theorem \ref{thm:cselectFreg}
applies, so $\dom\FReg=\Pb$  and $\val(p)<+\infty$ for every $p\in\Pb$.
Now, if there exists $p_0\in\Pb$ such that $\val(p_0)=-\infty$ (namely,
problem ($\POPp{p_0}$) does not admit any solution), then by
a well-known result in convex analysis (see, for instance, \cite[Proposition 2.1.4]{Zali02})
it must be $\val(p)=-\infty$ for every $p\in\inte\{p\in\Pb\ |\ \val(p)<+\infty\}=
\inte\Pb=\Pb$.
Otherwise, $\dom\val=\Pb$. In such an event, Theorem \ref{thm:cselectFreg}
ensures the existence of a continuous selection $\sel_\FReg:\Pb\longrightarrow\X$
of $\FReg$. Thus, according to the definition of $\val$ one has
$$
  \val(p)\le\varphi(p,\sel_\FReg(p)),\quad\forall p\in\Pb.
$$
Since $\varphi$ is continuous on $\inte\dom\varphi=\Pb\times\X$,
the composition $p\mapsto\varphi(p,\sel_\FReg(p))$ turns out
to be continuous on $\Pb$. Therefore, it is bounded from above
in a neighbourhood of each point $p\in\Pb$, and so is $\val$
by the above inequality. In the light of \cite[Them 2.2.9]{Zali02}
this suffices to guarantee the local Lipschitz continuity of $\val$ around each point,
and a fortiori its continuity.
\end{proof}

The assumptions of Theorem \ref{thm:lLipval} lead to single-out
a class of parametric constrained optimization problems with
a good marginal behaviour.
Henceforth, any parametric family of optimization problems
such as ($\POP$) satisfying all the assumptions (i)-(vi) in
Theorem \ref{thm:lLipval} will be called {\it qualified convex}
problems {\it with set-valued inclusion constraints} (for short,
{\it q.c.s.v.i.} problems).

\begin{remark}
(i) A first notable consequence of Theorem \ref{thm:lLipval} is the
subdifferentiability of the value function, whenever $\val\ne -\infty$,
associated with any family of q.c.s.v.i. problems.
Indeed, according to \cite[Theorem 2.4.9]{Zali02}, the continuity
and the convexity of $\val$ result in
$$
    \partial\val(p)\ne\varnothing,\quad\forall p\in\Pb.
$$
(ii) Further consequences of the joint convexity and
continuity of $\val$ can be derived when the space $\Pb$ is a Banach
space enjoying special additional properties.
In particular, whenever $\Pb$ is a separable Banach space
(more generally, a weak Asplund space),
then according to \cite[Theorem 5.61]{MorNam22} $\val$ is G\^ateaux
differentiable on a $\Gdelta$ subset of $\Pb$.
Whenever $\Pb$ is a space with a separable dual, then according to
\cite[Theorem 5.66]{MorNam22} $\val$ is even Fr\'echet
differentiable on a $\Gdelta$ subset of $\Pb$.
If, more in particular, $\Pb$ is a finite-dimensional Euclidean space,
then the well-known Rademacher theorem ensures that the points of differentiability
of $\val$ form a full (Lebesgue) measure set.
As remarked in \cite[Chapter 4.6]{Mord18}, the reader should take into account that
the lack of smoothness of $\val$ was one of the major concern in considering
such a fundamental mathematical object in classical calculus of
variations, in contrast stimulating the development
of meaningful constructions in nonsmooth analysis.
\end{remark}

\begin{example}
Let $\Pb=\X=\Y=\R$ and let $C=[0,+\infty)$. Let us consider the family
of problems ($\POP$), which are defined by $\varphi:\R\times\R\longrightarrow\R$
and $F:\R\times\R\rightrightarrows\R$ as follows
$$
\varphi(p,x)=p+x,\qquad F(p,x)=[p-x,+\infty),
$$
respectively. It is then clear that $\FReg:\R\rightrightarrows\R$ is
given by
$$
  \FReg(p)=(-\infty,p),\quad\forall p\in\R,
$$
and consequently
$$
  \val(p)=\inf_{x\in (-\infty,p]}(p+x)=-\infty,\quad\forall
  p\in\R.
$$
Let us check that this family of problems actually falls in the
class q.c.s.v.i..

As $\varphi$ is linear, assumption (i) is trivially fulfilled.

As it is $F(p,x)\backslash [0,+\infty)\subseteq [-|p-x|,0]$ for every
$(p,x)\in\R\times\R$, each set $F(p,x)$ is $[0,+\infty)$-bounded,
which amounts to assumption (ii) being fulfilled.

As for assumptions (iii) and (v), their fulfillment follows at once from the
continuity property of the linear function $h:\R^2\longrightarrow\R$,
being $h(p,x)=p-x$.

Assumption (iv) about the $[0,+\infty)$-concavity of $F$ is satisfied
because it is $F(p,x)=h(p,x)+[0,+\infty)$ and, as a linear mapping, $h$
is $[0,+\infty)$-concave (remember Example \ref{ex:Cconcgomega}).

As for assumption (vi), observe that $F(p,\cdot)$ is $[0,+\infty)$-increasing
at each point $x_0\in\R$, with $\incr{F(p,\cdot)}{x_0}\ge 2$. Indeed, for any
$\delta>0$ and $r\in (0,\delta]$, by taking $u=x_0-r\in\ball{x_0}{r}$, one finds
$$
  \ball{F(p,u)}{2r} = [p-(x_0-r)-2r,+\infty)
  \subseteq [p-x_0-r,+\infty)
   = \ball{F(p,x_0)+[0,+\infty)}{r}.
$$
The above inclusion remains true for every $p\in\Pb$. Thus, for the
set-valued mapping under consideration one obtains
$$
  \alpha_F\ge\inf\{\incr{F(p,\cdot)}{x}\ |\ (p,x)\in\R\times\R\}\ge 2>1,
$$
which shows that the condition in assumption (vi) happens to be satisfied.
\end{example}

The next lemma points out a further property of convexity stemming from
the $C$-concavity for set-valued mapping, which will be employed for
estimating subgradients of $\val$.

\begin{lemma}   \label{lem:convexcF}
Let $C\subseteq\Y$ be a convex cone.
If $F:\Pb\times\X\rightrightarrows\Y$ is $C$-concave then
function $x\mapsto\exc{F(p,x)}{C}$ is convex.
\end{lemma}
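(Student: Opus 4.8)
The plan is to reduce the claimed convexity to elementary monotonicity and sublinearity properties of the distance-to-$C$ functional. Fix $p\in\Pb$ and abbreviate $e(x)=\exc{F(p,x)}{C}$. For $x_1,x_2\in\X$ and $t\in[0,1]$ set $F_1=F(p,x_1)$, $F_2=F(p,x_2)$, and $F_t=F(p,tx_1+(1-t)x_2)$; the target inequality is $e(tx_1+(1-t)x_2)\le t\,e(x_1)+(1-t)\,e(x_2)$. The $C$-concavity hypothesis yields the inclusion $F_t\subseteq tF_1+(1-t)F_2+C$, and since the excess is monotone with respect to set inclusion in its first argument, it suffices to estimate $\exc{tF_1+(1-t)F_2+C}{C}$.

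First I would record the relevant properties of the map $y\mapsto\dist{y}{C}$. Because $C$ is a convex cone containing $\nullv$, this functional is sublinear: it is positively homogeneous, $\dist{\lambda y}{C}=\lambda\dist{y}{C}$ for $\lambda\ge 0$, and subadditive, $\dist{y_1+y_2}{C}\le\dist{y_1}{C}+\dist{y_2}{C}$. In addition, using $C+C\subseteq C$ (a consequence of convexity together with the cone property), one has $C\subseteq C-c$ for every $c\in C$, whence $\dist{y+c}{C}\le\dist{y}{C}$; that is, shifting by an element of $C$ cannot increase the distance to $C$.

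With these facts in hand, I would take an arbitrary $w\in tF_1+(1-t)F_2+C$ and write $w=ty_1+(1-t)y_2+c$ with $y_1\in F_1$, $y_2\in F_2$, $c\in C$. Applying in turn the shift estimate, subadditivity, and positive homogeneity gives $\dist{w}{C}\le\dist{ty_1+(1-t)y_2}{C}\le t\dist{y_1}{C}+(1-t)\dist{y_2}{C}$, and bounding $\dist{y_i}{C}\le\exc{F_i}{C}$ (valid since $y_i\in F_i$) produces $\dist{w}{C}\le t\,e(x_1)+(1-t)\,e(x_2)$. Taking the supremum over all admissible $w$ yields $\exc{tF_1+(1-t)F_2+C}{C}\le t\,e(x_1)+(1-t)\,e(x_2)$, and combining this with the monotonicity reduction of the first step completes the argument.

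I do not expect a genuine obstacle here: the only care needed is in handling extended-valued excesses (the inequality is trivial once the right-hand side is $+\infty$, and the values $F(p,x)$ are nonempty by the standing assumption $(\textsf{a}_1)$, so no empty-set convention intervenes) and in justifying the sublinearity of $\dist{\cdot}{C}$ from the convex-cone structure of $C$, which is standard. The conceptual heart of the proof is simply the interplay between $C$-concavity, which produces the defining inclusion, and the sublinearity of $\dist{\cdot}{C}$ together with its insensitivity to $C$-shifts.
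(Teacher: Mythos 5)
Your proof is correct and follows essentially the same route as the paper's: both rest on the $C$-concavity inclusion, the monotonicity of the excess under set enlargement, the sublinearity of $y\mapsto\dist{y}{C}$, and the identity $\exc{A+C}{C}=\exc{A}{C}$, with your version merely unpacking the paper's set-level excess inequalities into pointwise estimates for elements $w=ty_1+(1-t)y_2+c$. The only difference worth noting is that the paper runs the identical computation jointly in the pair $(p,x)$, thereby obtaining convexity of $(p,x)\mapsto\exc{F(p,x)}{C}$ (the form actually invoked later in the subgradient formula), whereas you fix $p$, which is exactly what the stated lemma requires and from which the joint version follows by the same argument.
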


\begin{proof}
Observe first that, as $C$ is a convex cone, the function $y\mapsto\dist{y}{C}$
is sublinear. As a consequence, for any $A,\, B\subseteq\Y$ and $t\in (0,+\infty)$,
one has
$$
  \exc{A+B}{C}\le \exc{A}{C}+\exc{B}{C}\quad\hbox{ and }\quad
  \exc{tA}{C}=t\exc{A}{C}.
$$
Moreover, observe that for any $A\subseteq\Y$ it holds $\exc{A+C}{C}=\exc{A}{C}$.
On the account of these observations, taken arbitrary $(p_1,x_1),\, (p_2,x_2)\in
\Pb\times\X$ and $t\in [0,1]$, by exploiting the $C$-concavity of $F$ one can write
\begin{eqnarray*}
  \exc{F(t(p_1,x_1)+(1-t)(p_1,x_1))}{C} &\le &\exc{tF(p_1,x_1)+(1-t)F(p_2,x_2)+C}{C}  \\
   &=& \exc{tF(p_1,x_1)+(1-t)F(p_2,x_2)}{C} \\
   &\le & t\exc{F(p_1,x_1)}{C}+(1-t)\exc{F(p_2,x_2)}{C}.
\end{eqnarray*}
The above inequalities complete the proof.
\end{proof}

The next result establishes an exact formula for calculating the subgradients of $\val$,
which is expressed in terms of problem data ($\varphi$, $F$ and $C$), thereby
providing relevant elements for the sensitivity analysis of ($\POP$).

\begin{theorem}
Let ($\POP$) be a family of q.c.s.v.i. problems, with $\val\ne -\infty$.
Let $\bar p\in\Pb$ and $\bar x\in\Argmin(\bar p)$. If
$\epi\varphi$ and $\graph\FReg\times\R$ are subtransversal at
$(\bar p,\bar x,\varphi(\bar p,\bar x))$, then it holds
\begin{equation}\label{eq:subdval}
  \partial\val(\bar p)=\{p^*+q^*\ |\ (p^*,x^*)\in\partial\varphi(\bar p,\bar x)
  \hbox{ and } (q^*,-x^*)\in\cone\partial\exc{F(\cdot)}{C}(\bar p,\bar x) \}.
\end{equation}
\end{theorem}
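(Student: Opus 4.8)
The plan is to reduce $\partial\val(\bar p)$ to a normal-cone computation on $\graph\FReg$ and then to convert that normal cone into $\cone\partial e(\bar p,\bar x)$, where $e(p,x):=\exc{F(p,x)}{C}$. Several ingredients are already available: $\graph\FReg$ is convex (Proposition \ref{pro:FRegconv}), $e$ is convex (Lemma \ref{lem:convexcF}, whose proof in fact yields joint convexity in $(p,x)$), and, $C$ being closed, $\graph\FReg=\{(p,x)\ |\ e(p,x)\le 0\}=\{(p,x)\ |\ e(p,x)=0\}$. Since the problems are q.c.s.v.i. and $\val\ne-\infty$, Theorem \ref{thm:lLipval} makes $\val$ convex and continuous, so that $p^*\in\partial\val(\bar p)$ if and only if $(p^*,-1)\in\Ncone{\epi\val}{(\bar p,\val(\bar p))}$. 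First I would realize $\epi\val$ as a linear image: with $M:=\epi\varphi\cap(\graph\FReg\times\R)$ and $\Pi(p,x,\beta):=(p,\beta)$, the attainment $\bar x\in\Argmin(\bar p)$ gives $\varphi(\bar p,\bar x)=\val(\bar p)$, hence $\bar z:=(\bar p,\bar x,\val(\bar p))\in M$ and $\epi\val=\Pi(M)$; a direct manipulation of the normal-cone definition through the adjoint $\Pi^*(p^*,s)=(p^*,\nullv,s)$ then shows that $(p^*,-1)\in\Ncone{\epi\val}{(\bar p,\val(\bar p))}$ exactly when $(p^*,\nullv,-1)\in\Ncone{M}{\bar z}$.

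Next I would split $\Ncone{M}{\bar z}$ by the intersection rule, which is precisely where the assumed subtransversality of $\epi\varphi$ and $\graph\FReg\times\R$ at $\bar z$ is used, giving $\Ncone{M}{\bar z}=\Ncone{\epi\varphi}{\bar z}+\Ncone{\graph\FReg\times\R}{\bar z}$. By continuity of $\varphi$ the epigraphical normal cone at the graph point is $\Ncone{\epi\varphi}{\bar z}=\{\lambda(p^*,x^*,-1)\ |\ \lambda\ge 0,\ (p^*,x^*)\in\partial\varphi(\bar p,\bar x)\}$, while $\Ncone{\graph\FReg\times\R}{\bar z}=\Ncone{\graph\FReg}{(\bar p,\bar x)}\times\{0\}$. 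Imposing $(p^*,\nullv,-1)\in\Ncone{M}{\bar z}$ forces $\lambda=1$ in the last coordinate, and matching the first two coordinates yields $p^*=p^*_\varphi+q^*$ with $(p^*_\varphi,x^*)\in\partial\varphi(\bar p,\bar x)$ and $(q^*,-x^*)\in\Ncone{\graph\FReg}{(\bar p,\bar x)}$ (that is, $q^*\in\Coder{\FReg}{\bar p,\bar x}(x^*)$). This is already \eqref{eq:subdval} with $\Ncone{\graph\FReg}{(\bar p,\bar x)}$ standing in for $\cone\partial\exc{F(\cdot)}{C}(\bar p,\bar x)$.

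It then remains to prove the identity $\Ncone{\graph\FReg}{(\bar p,\bar x)}=\cone\partial e(\bar p,\bar x)$. The inclusion $\cone\partial e(\bar p,\bar x)\subseteq\Ncone{\graph\FReg}{(\bar p,\bar x)}$ is immediate: for $v\in\partial e(\bar p,\bar x)$ and any $z\in\graph\FReg$, the subgradient inequality combined with $e(z)\le 0=e(\bar p,\bar x)$ gives $\langle v,z-(\bar p,\bar x)\rangle\le 0$. For the reverse inclusion I would invoke the global error bound \eqref{in:glerbosvi}: since $\dist{(p,x)}{\graph\FReg}\le\dist{x}{\FReg(p)}$, it yields, for any $\alpha\in(1,\alpha_F)$, the estimate $\dist{(p,x)}{\graph\FReg}\le(\alpha-1)^{-1}e(p,x)$. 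Writing $d:=\dist{\cdot}{\graph\FReg}$ and recalling Proposition \ref{pro:subdnconerel}(ii), which gives $\Ncone{\graph\FReg}{(\bar p,\bar x)}=\cone\partial d(\bar p,\bar x)$, it suffices to prove $\partial d(\bar p,\bar x)\subseteq(\alpha-1)^{-1}\partial e(\bar p,\bar x)$. Because $d$ and $e$ both vanish at $(\bar p,\bar x)$ and $d\le(\alpha-1)^{-1}e$ nearby, their one-sided directional derivatives satisfy $d'((\bar p,\bar x);h)\le(\alpha-1)^{-1}e'((\bar p,\bar x);h)$ for all $h$; passing to support functions gives the claimed inclusion of the (weak$^*$-closed convex) subdifferentials, hence $\cone\partial d(\bar p,\bar x)\subseteq\cone\partial e(\bar p,\bar x)$.

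The hard part is this last support-function step: it is legitimate only if $(\bar p,\bar x)$ is an interior point of $\dom e$ at which $e$ is continuous (equivalently, locally bounded above), so that $e'((\bar p,\bar x);\cdot)$ truly is the support function of the nonempty weak$^*$-compact set $\partial e(\bar p,\bar x)$. Extracting this local boundedness of the excess from the Hausdorff $C$-upper semicontinuity of $F$ and the remaining q.c.s.v.i. hypotheses is the main technical obstacle I anticipate; once it is in hand, substituting $\Ncone{\graph\FReg}{(\bar p,\bar x)}=\cone\partial\exc{F(\cdot)}{C}(\bar p,\bar x)$ into the decomposition of the second step gives exactly \eqref{eq:subdval}, completing the proof.
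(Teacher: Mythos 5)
Your first half is a genuinely different and more self-contained route than the paper's: where the paper simply invokes the representation $\partial\val(\bar p)=\bigcup_{(p^*,x^*)\in\partial\varphi(\bar p,\bar x)}\{p^*+\Coder{\FReg}{\bar p,\bar x}(x^*)\}$ from \cite[Theorem 4.5]{HuAnXu24}, you re-derive it by projecting $M=\epi\varphi\cap(\graph\FReg\times\R)$ and splitting $\Ncone{M}{\bar z}$ with the convex intersection rule under subtransversality. That works, with two caveats worth recording: (a) $\epi\val=\Pi(M)$ is not literally true unless the infimum is attained at every nearby parameter; you only have $\Pi(M)\subseteq\epi\val\subseteq\cl\Pi(M)$, but since a convex set and its closure have the same normal cone at common points, your adjoint manipulation survives; (b) the intersection rule $\Ncone{A\cap B}{\bar z}=\Ncone{A}{\bar z}+\Ncone{B}{\bar z}$ under subtransversality is exactly the content hidden in the paper's citation, so it should itself be proved or cited (it follows from the distance-function inequality defining subtransversality, Proposition \ref{pro:subdnconerel}, and the sum rule for continuous convex functions). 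The second half of your plan, identifying $\Ncone{\graph\FReg}{(\bar p,\bar x)}$ with $\cone\partial\exc{F(\cdot)}{C}(\bar p,\bar x)$, is the same decomposition the paper uses, and this is where your argument has a real hole.

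The gap is the final support-function step, and you correctly diagnose it yourself: that step needs $e(p,x):=\exc{F(p,x)}{C}$ to be continuous (locally bounded above) at $(\bar p,\bar x)$, and this is \emph{not} available. The q.c.s.v.i. hypotheses guarantee only that for each $p$ there exists \emph{one} point $\hat x_p$ with $F(p,\hat x_p)$ $C$-bounded, so $e$ may take the value $+\infty$ at points arbitrarily close to $(\bar p,\bar x)$; no amount of Hausdorff $C$-u.s.c. will rescue interiority of $\dom e$. As written, the proof is therefore incomplete. But the obstacle is entirely an artifact of the detour through directional derivatives: the inclusion you need follows from the subgradient inequality alone. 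If $v\in\Ncone{\graph\FReg}{(\bar p,\bar x)}$, Proposition \ref{pro:subdnconerel}(ii) gives $v\in t\,\partial\dist{\cdot}{\graph\FReg}(\bar p,\bar x)$ for some $t\ge 0$, hence for every $(p,x)\in\Pb\times\X$, using your observation $\dist{(p,x)}{\graph\FReg}\le\dist{x}{\FReg(p)}$ and the global error bound (\ref{in:glerbosvi}),
\[
  \langle v,(p,x)-(\bar p,\bar x)\rangle\le t\,\dist{(p,x)}{\graph\FReg}
  \le t\,\dist{x}{\FReg(p)}\le\frac{t}{\alpha-1}\,e(p,x)
  =\frac{t}{\alpha-1}\bigl(e(p,x)-e(\bar p,\bar x)\bigr),
\]
since $e(\bar p,\bar x)=0$. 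This says directly that $v\in\frac{t}{\alpha-1}\partial e(\bar p,\bar x)\subseteq\cone\partial e(\bar p,\bar x)$, with no continuity of $e$ anywhere; this is precisely the paper's argument (the paper obtains the opposite inclusion, which you handled correctly, by comparing $t\,e$ with the indicator function of $\graph\FReg$). Even if you insist on directional derivatives, the support-function duality is unnecessary: for any convex $f$ finite at $\bar z$ one has $\partial f(\bar z)=\{v\ |\ \langle v,h\rangle\le f'(\bar z;h)\ \forall h\}$ unconditionally, so $d'(\bar z;\cdot)\le(\alpha-1)^{-1}e'(\bar z;\cdot)$ already yields $\partial d(\bar z)\subseteq(\alpha-1)^{-1}\partial e(\bar z)$ without any compactness or continuity of $e$.
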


\begin{proof}
Under the subtransversality qualification condition it is
possible to employ the exact representation of $\partial\val$
provided by \cite[Theorem 4.5]{HuAnXu24}, which is valid in any
normed space setting.
According to it, one has
$$
  \partial\val(\bar p)=\bigcup_{(p^*,x^*)\in\partial
  \varphi(\bar p,\bar x)}\{p^*+\Coder{\FReg}{\bar p,\bar x}(x^*) \}.
$$
Then, what remains to do is to express the set
$\Coder{\FReg}{\bar p,\bar x}(x^*)$ in terms of the problem data
$F$ and $C$. By recalling the definition of coderivative of a
convex set-valued mapping, this can be done via the normal cone representation
$$
  (q^*,-x^*)\in\Ncone{\graph\FReg}{(\bar p,\bar x)}=\bigcup_{t\ge 0}
   t\partial\dist{\cdot}{\graph\FReg}(\bar p,\bar x),
$$
which has been formulated in Proposition \ref{pro:subdnconerel}(iii).
Indeed, if $(q^*,-x^*)\in\Ncone{\graph\FReg}{(\bar p,\bar x)}$,
then for some $t\ge 0$, as $\bar x\in\FReg(\bar p)$, it must be
\begin{eqnarray*}
  \langle (q^*,-x^*),(p,x)-(\bar p,\bar x)\rangle &=& t\dist{(p,x)}{\graph\FReg} \\
   &\le& t\dist{x}{\FReg(p)},\quad\forall (p,x)\in\Pb\times\X.
\end{eqnarray*}
Thus, since for q.c.s.v.i. problems all the assertions of Theorem \ref{thm:cselectFreg}
hold true, by recalling the global error bound estimate in (\ref{in:glerbosvi}),
if $\alpha\in (1,\alpha_F)$ one obtains
$$
   \langle (q^*,-x^*),(p,x)-(\bar p,\bar x)\rangle\le\frac{t}{\alpha-1}
   \exc{F(p,x)}{C},\quad\forall (p,x)\in\Pb\times\X,
$$
which clearly shows that $(q^*,-x^*)\in\cone\partial\exc{F(\cdot)}{C}
(\bar p,\bar x)$.

On the other hand, it suffices to observe that for every $t\ge 0$
it holds
$$
   t\exc{F(p,x)}{C}\le\ind{(p,x)}{\graph\FReg},\quad\forall (p,x)\in\Pb\times\X,
$$
whence, by passing to the respective subdifferential at $(\bar p,\bar x)$
in both the sides, in the light of (\ref{eq:indsubd}), one finds
$$
  t\partial\exc{F(\cdot)}{C}(\bar p,\bar x)\subseteq\partial
  \ind{\cdot}{\graph\FReg}(\bar p,\bar x)=\Ncone{\graph\FReg}{(\bar p,\bar x)}.
$$
This completes the proof.
\end{proof}

\begin{example}
Consider a parametric class of problems ($\POP$) defined by
$\Pb=\R^s$, $\X=\R^n$, $\Y=\R^m$, a convex function $\varphi:\R^s\times\R^n
\longrightarrow\R$ and by the following constraint system
$$
  H_{\mathcal G}(p,x)\subseteq C,
$$
where $C$ is a nontrivial, closed, pointed, convex cone and
$H_{\mathcal G}$ is a fan compactly generated by the nonempty, convex
compact set ${\mathcal G}\subseteq\Lin(\R^s\times\R^n,\R^m)\cong
\Lin(\R^s,\R^m)\times\Lin(\R^n,\R^m)$.
Notice that if taking an element $(M,\Lambda)\in\Lin(\R^s\times\R^n,\R^m)$,
where $M\in\Lin(\R^s,\R^m)$ and $\Lambda\in\Lin(\R^n,\R^m)$,
its representation matrix is formed by juxtaposing the representation
matrices of $M$ and $\Lambda$, respectively.
As remarked in Example \ref{ex:compgenfan}, $H_{\mathcal G}$ is
a Lipschitz continuous $C$-concave set-valued mapping, taking compact values, so
hypotheses (ii)-(v) of Theorem \ref{thm:lLipval} are satisfied.
Assume that
$$
  \inf_{(M,\Lambda)\in{\mathcal G}}\sur{(M,\Lambda)}=\eta_{\mathcal G}>0
$$
and
$$
   \inte\left(\bigcap_{(M,\Lambda)\in{\mathcal G}}(M,\Lambda)^{-1}(C)
   \right)\ne\varnothing,
$$
so $\incr{H_{\mathcal G}}{p,x}\ge\eta_{\mathcal G}+1$ for every $(p,x)
\in\R^s\times\R^n$. This implies
$$
  \alpha_{H_{\mathcal G}}=\inf\left\{\incr{H_{\mathcal G}(p,\cdot)}{x}\ |\
  (p,x)\in\Pb\times\X,\ H_{\mathcal G}(p,x)\not\subseteq C \right\}\ge
  \eta_{\mathcal G}+1>1.
$$
Thus also hypothesis (vi) of Theorem \ref{thm:lLipval} is fulfilled.
This shows that this kind of problem is actually q.c.s.v.i..
According to Theorem \ref{thm:lLipval}, if $\val\ne-\infty$, then
$\val $ is a convex locally Lipschitz function and, upon the subtransversality
condition on $\epi\varphi$ and $\graph\FReg\times\R$, the following
characterization of its subgradients holds
\begin{equation}    \label{eq:subdvalexc}
 \partial\val(\bar p)=\{p^*+q^*\ |\ (p^*,x^*)\in\partial\varphi(\bar p,\bar x)
  \hbox{ and } (q^*,-x^*)\in\cone\partial\exc{H_{\mathcal G}(\cdot)}{C}
  (\bar p,\bar x) \},
\end{equation}
for every pair $(\bar p,\bar x)$, with $\bar p\in\R^s$ and $\bar x\in\Argmin(\bar p)$.
Denote by
\begin{eqnarray*}
  {\mathcal G}(\bar p,\bar x) = \{(M_0,\Lambda_0)\in{\mathcal G}\ |\
  \exc{H_{\mathcal G}(\bar p,\bar x)}{C} &=& \max_{(M,\Lambda)\in {\mathcal G}}
  \dist{M\bar p+\Lambda\bar x}{C} \\
  &=&\dist{M_0\bar p+\Lambda_0\bar x}{C}\}.
\end{eqnarray*}
Notice that, since function $y\mapsto\dist{y}{C}$ is (Lipschitz) continuous
and the evaluation function $\eva_{(\bar p,\bar x)}:\Lin(\R^s\times\R^n,\R^m)
\longrightarrow\R^m$, i.e. $\eva_{(\bar p,\bar x)}(M,\Lambda)=M\bar p+\Lambda\bar x$,
is continuous, also their composition is continuous. Therefore, by compactness
of ${\mathcal G}$ it must be ${\mathcal G}(\bar p,\bar x)\ne\varnothing$.
According to the subdifferential calculus rule for $\max$ functions,
by taking into account of Proposition \ref{pro:subdnconerel}(i) and
formula (\ref{eq:subdlincomp}), one obtains
\begin{eqnarray*}
  \partial\exc{H_{\mathcal G}(\cdot)}{C}(\bar p,\bar x) &=&\partial
  \max_{(M,\Lambda)\in {\mathcal G}} \dist{M\cdot+\Lambda\,\cdot}{C}(\bar p,\bar x) \\
   &=& \clco\bigcup_{(M,\Lambda)\in {\mathcal G(\bar p,\bar x)}}
   \dist{M\cdot+\Lambda\,\cdot}{C}(\bar p,\bar x) \\
   &=& \clco\bigcup_{(M,\Lambda)\in {\mathcal G(\bar p,\bar x)}}
   (M^*,\Lambda^*)[\Ncone{C}{M\bar p+\Lambda\bar x}\cap\Uball].
\end{eqnarray*}
Thus, from the subgradient representation in (\ref{eq:subdvalexc}) it is
possible to derive the following exact estimate, which is fully expressed
in terms of problem data
\begin{eqnarray}     \label{eq:valsubdcomtpfan}
  \partial\val(\bar p) =  \bigg\{ p^*+q^*  & | &  (p^*,x^*)\in\partial\varphi(\bar p,\bar x)
  \hbox{ and }  \\
   & & (q^*,-x^*)\in\cone\bigl(\clco\bigcup_{(M,\Lambda)\in {\mathcal G(\bar p,\bar x)}}
   (M^*,\Lambda^*)[\Ncone{C}{M\bar p+\Lambda\bar x}\cap\Uball]\bigl) \bigg\} .  \nonumber
\end{eqnarray}
Let us test formula (\ref{eq:valsubdcomtpfan}) in a specific case, where
calculations are easy to be checked.
Let $\Pb=\X=\Y=\R$, $C=[0,+\infty)$, ${\mathcal G}=\{-1\}\times [1,2]$
(with linear mappings being identified with their representation matrix)
and $\varphi:\R\times\R\longrightarrow\R$ given by
$$
  \varphi(p,x)=|p|+x,
$$
so the class ($\POP$) becomes
\begin{equation*}
   \begin{array}{cl}
    \displaystyle\min_{x\in\R}  & |p|+x  \\
      \hbox{ sub } & H_{\{-1\}\times [1,2]}(p,x)=\{-p+\lambda x\ |\
      \lambda\in [1,2]\} \subseteq [0,+\infty).
  \end{array}
\end{equation*}
Since it is
$$
  -p+\lambda x\ge 0,\quad\forall\lambda\in [1,2]
$$
iff it holds
$$
   x\ge \frac{p}{\lambda},\quad\forall\lambda\in [1,2],
$$
the feasible region mapping $\FReg:\R\rightrightarrows\R$ turns out to be
\begin{equation*}
   \FReg(p)=\left\{\begin{array}{ll}
                     \left[p,+\infty)\right. & \hbox{ if }  p\ge 0, \\
                     \\
                     \left[p/2,+\infty)\right. & \hbox{ if }  p< 0.
                 \end{array}\right.
\end{equation*}
Notice that $\graph\FReg$ is a polyhedral convex cone with vertex
at the origin (so $\FReg$ is what is called after Rockafellar a convex
process). Therefore, with the given problem data, one readily deduces that $\Argmin:\R\rightrightarrows\R$
takes the form
\begin{equation*}
   \Argmin(p)=\left\{\begin{array}{ll}
                     \{p\} & \hbox{ if }  p\ge 0, \\
                     \\
                     \{p/2\} & \hbox{ if }  p< 0.
                 \end{array}\right.
\end{equation*}
Consequently, the optimal value function $\val:\R\longrightarrow\R$
associated with the present class of problems results in
\begin{equation*}
   \val(p)=\left\{\begin{array}{ll}
                     2p & \hbox{ if }  p\ge 0, \\
                     \\
                     -p/2 & \hbox{ if }  p< 0.
                 \end{array}\right.
\end{equation*}
As it should be, $\val$ is convex and locally Lipschitz.
By taking into account that, in the present setting, it is
$\Pb^*=\X^*=\R$, one readily sees that
\begin{equation}     \label{eq:subdvalnumex}
  \partial\val(\bar p)=\left\{\begin{array}{ll}
                               \{2\}  & \hbox{ if }  \bar p> 0 \\
                               \\
                                \left[-\displaystyle\frac{1}{2},2\right] & \hbox{ if }  \bar p=0 \\
                                \\
                                \{-\frac{1}{2}\}  & \hbox{ if }  \bar p<0.
                              \end{array}\right.
\end{equation}

Before checking the validity of formula (\ref{eq:valsubdcomtpfan}),
it should be noticed that $\epi\varphi$ is a polyhedral cone (with vertex at the origin)
in $\R^3$ as well as $\graph\FReg\times\R$,
so that the subtransversality condition on $\epi\varphi$ and $\graph\FReg\times\R$
is satisfied at each point $(\bar p,\bar x,\varphi(\bar p,\bar x))$ in as much
both the sets are polyhedral convex sets.
Besides, it is useful to note that, by well-known subdifferential calculus rules,
it holds
\begin{equation*}
  \partial\varphi(p,x)=\left\{\begin{array}{ll}
                               \{(1,1)\}  & \ \forall (p,x)\in (0,+\infty)\times\R   \\
                               \\
                                \left[-1,1\right]\times\{1\} & \ \forall (p,x)\in \{0\}\times\R \\
                                \\
                               \{(-1,1)\}  & \ \forall (p,x)\in (-\infty,0)\times\R.
                              \end{array}\right.
\end{equation*}

\noindent $\bullet$ {\bf Case $\bar p>0$.} In such an event, it must be $\bar x=\bar p$ and hence
\begin{eqnarray*}
   {\mathcal G}(\bar p,\bar p) &=
   \left\{\right. (-1,\lambda_0)\  |&  \ \lambda_0\in [1,2] \hbox{ and }  \\
   &  &\dist{-\bar p+\lambda_0\bar p}{[0,+\infty)}=\max_{\lambda\in [1,2]}
   \dist{-\bar p+\lambda\bar p}{[0,+\infty)} \left.\right\}
      \\
     &= \{-1\}\times [1,2].   &
\end{eqnarray*}
If $(-1,\lambda_0)\in {\mathcal G}(\bar p,\bar p)$ it is
\begin{equation*}
  -1\cdot\bar p+\lambda_0\bar x=(\lambda_0-1)\bar p\ \left\{\begin{array}{ll}
                               \in (0,+\infty)  & \forall\lambda_0\in (1,2]   \\
                               \\
                                =0  &  \hbox{ if }\lambda_0=1.
                              \end{array}\right.
\end{equation*}
Then, one has
\begin{equation*}
  \Ncone{[0,+\infty)}{-\bar p+\lambda_0\bar x}\cap\Uball
  =\left\{\begin{array}{ll}
         \{0\}  & \forall\lambda_0\in (1,2]   \\
                               \\
         \left[-1,0\right] &  \hbox{ if } \lambda_0=1.
         \end{array}\right.
\end{equation*}
This implies
\begin{equation*}
  \{(-t,\lambda_0 t)\ |\ t\in\Ncone{[0,+\infty)}{-\bar p+\lambda_0\bar x}\cap\Uball\}=
  \left\{\begin{array}{ll}
             \{(0,0)\}  & \quad\forall\lambda_0\in (1,2]   \\
                               \\
             \conv\{(0,0),\, (1,-1)\}   & \quad \hbox{ if } \lambda_0=1,
                              \end{array}\right.
\end{equation*}
whence it follows
$$
  \cone\left(\clco\hskip-0.7cm \bigcup_{(-1,\lambda_0)\in{\mathcal G}(\bar p,\bar p)} \hskip-0.7cm
  \{(-t,\lambda_0 t)\ |\ t\in\Ncone{[0,+\infty)}{-\bar p+\lambda_0\bar x}\cap\Uball\}\right) \\
  =\{t(1,-1)\ |\ t\in [0,+\infty)\}.
$$
Since if $(p^*,x^*)\in\partial\varphi(\bar x,\bar p)$ it must be $p^*=1$ and $x^*=1$,
then $(q^*,-1)\in\{t(1,-1)\ |\ t\in [0,+\infty)\}$ only if $t=1$ and hence $q^*=1$.
Thus, according to formula (\ref{eq:valsubdcomtpfan}) one obtains
$$
  \partial\val(\bar p)=\{p^*+q^*\}=\{2\},
$$
consistently with the value in (\ref{eq:subdvalnumex}).

\vskip.5cm

\noindent $\bullet$ {\bf Case $\bar p=0$.} In such an event, it must be $\bar x=\bar p=0$ and hence
\begin{eqnarray*}
   {\mathcal G}(0,0) &=
   \left\{\right. (-1,\lambda_0)\  |&  \ \lambda_0\in [1,2] \hbox{ and }  \\
   &  &\dist{0}{[0,+\infty)}=\max_{\lambda\in [1,2]}
   \dist{-1\cdot 0+\lambda\cdot 0}{[0,+\infty)} \left.\right\}
      \\
     &= \{-1\}\times [1,2].   &
\end{eqnarray*}
If $(-1,\lambda_0)\in {\mathcal G}(0,0)$ it is $-1\cdot 0+\lambda_0\cdot 0=0$
which yields
$$
  \Ncone{[0,+\infty)}{0}\cap\Uball=[-1,0],\quad\forall\lambda_0\in [1,2].
$$
This implies
\begin{eqnarray*}
  \{(-t,\lambda_0 t)\ |\ t\in\Ncone{[0,+\infty)}{0}\cap\Uball\} &=&
  \bigcup_{\lambda_0\in [1,2]} \conv\{(0,0),\, (1,-\lambda_0)\}   \\
  &=& \conv\{(0,0),\, (1,-1),\, (1,-2)\},
\end{eqnarray*}
whence it follows
$$
  \cone\left(\clco\hskip-0.7cm \bigcup_{(-1,\lambda_0)\in{\mathcal G}(0,0)} \hskip-0.7cm
  \{(-t,\lambda_0 t)\ |\ t\in\Ncone{[0,+\infty)}{0}\cap\Uball\}\right) \\
  =\cone\left(\conv\{(0,0),\, (1,-1),\, (1,-2)\}\right).
$$
Since if $(p^*,x^*)\in\partial\varphi(\bar x,\bar p)$ it is $p^*\in [-1,1]$ and $x^*=1$,
then $(q^*,-1)\in\cone\left(\conv\{(0,0),\, (1,-1),\, (1,-2)\}\right)$ only if  $q^*\in
\left[\frac{1}{2},1\right]$.
Thus, according to formula (\ref{eq:valsubdcomtpfan}) one obtains
$$
  \partial\val(\bar p)=[-1,1]+\left[\frac{1}{2},1\right]=
  \left[-\frac{1}{2},2\right],
$$
which is consistent with the set found in (\ref{eq:subdvalnumex}).

\vskip.5cm

\noindent $\bullet$ {\bf Case $\bar p<0$.} In such an event, it must be
$\bar x=\bar p/2$, and hence
\begin{eqnarray*}
   {\mathcal G}(\bar p,\bar p/2) &=
   \biggl\{ (-1,\lambda_0)\  |&  \ \lambda_0\in [1,2] \hbox{ and }  \\
   &  &\dist{-\bar p+\lambda_0\frac{\bar p}{2}}{[0,+\infty)}=\max_{\lambda\in [1,2]}
   \dist{-\bar p+\lambda\frac{\bar p}{2}}{[0,+\infty)} \biggr\}
      \\
     &= \{-1\}\times [1,2].   &
\end{eqnarray*}
If $(-1,\lambda_0)\in {\mathcal G}(\bar p,\bar p/2)$ it is
\begin{equation*}
  -1\cdot\bar p+\lambda_0\bar x=\left(\frac{\lambda_0}{2}-1\right)\bar p\ \left\{\begin{array}{ll}
                                  \in (0,+\infty)  & \forall\lambda_0\in [1,2)   \\
                               \\
                                =0  &  \hbox{ if }\lambda_0=2.
                              \end{array}\right.
\end{equation*}
Then, one has
\begin{equation*}
  \Ncone{[0,+\infty)}{-\bar p+\lambda_0\bar x}\cap\Uball
  =\left\{\begin{array}{ll}
         \{0\}  & \forall\lambda_0\in [1,2)   \\
                               \\
         \left[-1,0\right] &  \hbox{ if }\lambda_0=2.
         \end{array}\right.
\end{equation*}
This implies
\begin{equation*}
  \{(-t,\lambda_0 t)\ |\ t\in\Ncone{[0,+\infty)}{-\bar p+\lambda_0\bar x}\cap\Uball\}=
  \left\{\begin{array}{ll}
             \{(0,0)\}  & \quad\forall\lambda_0\in [1,2)   \\
                               \\
             \conv\{(0,0),\, (1,-2)\}   & \quad \hbox{ if } \lambda_0=2,
                              \end{array}\right.
\end{equation*}
which gives
$$
  \cone\left(\clco\hskip-0.7cm \bigcup_{(-1,\lambda_0)\in{\mathcal G}(\bar p,\bar p/2)} \hskip-0.7cm
  \{(-t,\lambda_0 t)\ |\ t\in\Ncone{[0,+\infty)}{-\bar p+\lambda_0\bar x}\cap\Uball\}\right) \\
  =\{t(1,-2)\ |\ t\in [0,+\infty)\}.
$$
It is clear that  if $(p^*,x^*)\in\partial\varphi(\bar x,\bar p/2)$ it must be $p^*=-1$ and $x^*=1$,
then $(q^*,-1)\in\{t(1,-2)\ |\ t\in [0,+\infty)\}$ only if $t=\frac{1}{2}$ and hence $q^*=\frac{1}{2}$.
Thus, according to formula (\ref{eq:valsubdcomtpfan}), one obtains
$$
  \partial\val(\bar p)=\{p^*+q^*\}=\left\{-1+\frac{1}{2}\right\}=
  \left\{-\frac{1}{2}\right\},
$$
again consistently with the value in (\ref{eq:subdvalnumex}).
\end{example}

\vskip1cm


\section{Problem calmness}      \label{Sect:4}

The stability and sensitivity analysis conducted in the previous section
is complemented here with a related issue, dealing with a certain stability
behaviour that parametric constrained optimization problems may exhibit.
Proposed by R.T. Rockafellar, such behaviour was studied in \cite{Clar76}
and then employed in connection with penalization reduction
procedures within perturbed nonlinear programming in \cite{Burk91}
and subsequent works.

\begin{definition}[Problem calmness]     \label{def:propcalm}
Given a family of problems ($\POP$),
let $\bar p\in\Pb$ and let $\bar x\in\Argmin(\bar p)$.
Problem ($\POPp{\bar p}$) is said to be {\it calm} at $\bar x$ if $\exists r,\,\ \lambda>0$:
\begin{equation}    \label{in:defprobcalm}
  \inf_{p\in\ball{\bar p}{r}\backslash\{\bar p\}}\ \
  \inf_{x\in\ball{\bar x}{r}\cap\FReg(p)}
  \frac{\varphi(p,x)-\varphi(\bar p,\bar x)}{\|p-\bar p\|}\ge -\lambda.
\end{equation}
\end{definition}

As a straightforward consequence of Theorem \ref{thm:lLipval}, for the
class of parametric optimization problems under investigations problem
calmness comes under natural qualified convexity assumptions.

\begin{corollary}    \label{cor:qcsvicalm}
If the problems in ($\POP$) are q.c.s.v.i., then for any $\bar p\in\dom\val=\Pb$,
problem ($\POPp{\bar p}$) is calm at any $\bar x\in\Argmin(\bar p)$.
\end{corollary}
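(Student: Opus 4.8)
The plan is to read this off directly from the local Lipschitz continuity of $\val$ furnished by Theorem \ref{thm:lLipval}, so that no fresh estimate is needed. Since the problems are assumed q.c.s.v.i. and we are given $\bar p\in\dom\val=\Pb$, we are automatically in the second alternative of Theorem \ref{thm:lLipval}: $\val$ is real-valued and locally Lipschitz around $\bar p$. First I would extract from this the constants $r>0$ and $L>0$ for which
$$
  |\val(p)-\val(\bar p)|\le L\|p-\bar p\|,\qquad\forall p\in\ball{\bar p}{r},
$$
and in particular the one-sided bound $\val(p)-\val(\bar p)\ge -L\|p-\bar p\|$ on that ball.

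The key observation linking calmness to $\val$ is that, for $\bar x\in\Argmin(\bar p)$, one has $\varphi(\bar p,\bar x)=\val(\bar p)$, whereas for every $p\in\Pb$ and every feasible $x\in\FReg(p)$ the definition of the optimal value function gives $\varphi(p,x)\ge\val(p)$. Chaining these with the Lipschitz bound yields, for $p\in\ball{\bar p}{r}$ and any such $x$,
$$
  \varphi(p,x)-\varphi(\bar p,\bar x)\ge\val(p)-\val(\bar p)\ge -L\|p-\bar p\|.
$$
Dividing by $\|p-\bar p\|>0$ produces the uniform lower bound $-L$. Note that restricting $x$ to $\ball{\bar x}{r}\cap\FReg(p)$ only shrinks the index set of the inner infimum and can therefore only raise its value, so the same bound survives; taking $\lambda=L$ then verifies inequality (\ref{in:defprobcalm}).

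I do not expect a genuine obstacle, this being a corollary in the literal sense: all the substantive work—continuous selection, the global error bound, and the passage from convexity to local Lipschitz continuity—has already been absorbed into Theorems \ref{thm:cselectFreg} and \ref{thm:lLipval}. The only point warranting a word of care is the degenerate situation in which $\ball{\bar x}{r}\cap\FReg(p)$ is empty for some $p$; there the inner infimum equals $+\infty$ by convention and the required inequality holds trivially, so it does no harm to the uniform estimate.
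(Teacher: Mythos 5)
Your proposal is correct and follows essentially the same route as the paper's proof: invoke the local Lipschitz continuity of $\val$ around $\bar p$ from Theorem \ref{thm:lLipval}, then combine $\varphi(\bar p,\bar x)=\val(\bar p)$ with $\varphi(p,x)\ge\val(p)$ for $x\in\FReg(p)$ to bound the quotient in (\ref{in:defprobcalm}) below by $-L$. Your extra remarks (the empty-intersection convention and ruling out the alternative $\val\equiv-\infty$) are sound points of care that the paper leaves implicit, but they do not change the argument.
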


\begin{proof}
According to Theorem \ref{thm:lLipval}, there exist positive
$r$ and $\ell$ such that
$$
  |\val(p_1)-\val(p_2)|\le\ell\|p_1-p_2\|,\quad\forall
  p_1,\, p_2\in\ball{\bar p}{r}.
$$
On the account of this inequality it follows
$$
   \inf_{p\in\ball{\bar p}{r}\backslash\{\bar p\}}\ \
  \inf_{x\in\ball{\bar x}{r}\cap\FReg(p)}
  \frac{\varphi(p,x)-\varphi(\bar p,\bar x)}{\|p-\bar p\|}\ge
  \inf_{p\in\ball{\bar p}{r}\backslash\{\bar p\}}\ \
  \frac{\val(p)-\val(\bar p)}{\|p-\bar p\|}\ge -\ell.
$$
\end{proof}

\begin{definition}     \label{def:penalf}
Let $\bar x\in F^{+1}(C)$ be a local solution of the problem
\begin{equation*}
  (\OP) \hskip 1.8cm \begin{array}{cl}
    \displaystyle\min_{x\in X}  & \varphi(x) \\
      \hbox{ sub } & F(x)\subseteq C.
  \end{array}
\end{equation*}
with a set-valued inclusion constraint.
$(\OP)$ is said to admit a {\it penalty function} at $\bar x$
if there exists $\lambda_*\ge 0$ such that for every $\lambda\in(\lambda_*,+\infty)$
$\bar x$ is an unconstrained local minimizer of
\begin{equation*}
  \varphi_\lambda(x)=\varphi(x)+\lambda\exc{F(x)}{C}.
\end{equation*}
\end{definition}

In the next result, a sufficient condition for the existence of a penalty function
for general parametric optimization problems with set-valued inclusion constraints
is formulated, where problem calmness plays a crucial role.

\begin{theorem}     \label{thm:penalfexist}
With reference to the family of problems ($\POP$), let $\bar p\in\Pb$
and let $\bar x\in\Argmin(\bar p)$.
Suppose that:
\begin{itemize}
\item[(i)] $\varphi(\bar p,\cdot)$ is l.s.c. at $\bar x$;

\item[(ii)] $\varphi(\cdot,\bar x)$ is calm from above at $\bar p$,
  uniformly in $x$, i.e. there exist positive $\gamma$ and $r_\gamma$ such that
$$
   \varphi(p,x)-\varphi(\bar p,x)\le\gamma\|p-\bar p\|,\quad
   \forall p\in\ball{\bar p}{r_\gamma},\ \forall x\in\ball{\bar x}{r_\gamma};
$$

\item[(iii)] there exist positive $\beta$ and $r_\beta$ such that
\begin{equation}   \label{in:methem}
  \dist{\bar p}{F^{+1}(\cdot,x)(C)}\le\beta\exc{F(\bar p,x)}{C},
  \quad\forall x\in\ball{\bar x}{r_\beta};
\end{equation}

  \item[(iv)] ($\POPp{\bar p}$) is calm at $\bar x$.
\end{itemize}
Then, ($\POPp{\bar p}$) admits a penalty function at $\bar x$.
\end{theorem}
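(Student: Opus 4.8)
The plan is to verify the defining inequality of a penalty function directly, without passing through $\val$ (here we cannot invoke the Lipschitz behaviour of the marginal function, since q.c.s.v.i.\ is not assumed). Since $\bar x\in\Argmin(\bar p)\subseteq\FReg(\bar p)$, one has $F(\bar p,\bar x)\subseteq C$, hence $\exc{F(\bar p,\bar x)}{C}=0$ and $\varphi_\lambda(\bar x)=\varphi(\bar p,\bar x)$. Thus it will suffice to exhibit $\lambda_*\ge 0$ and a radius $\rho>0$ for which
$$
  \varphi(\bar p,x)+\lambda\,\exc{F(\bar p,x)}{C}\ge\varphi(\bar p,\bar x),
  \qquad\forall\,\lambda>\lambda_*,\ \forall\,x\in\ball{\bar x}{\rho}.
$$
Writing $e(x):=\exc{F(\bar p,x)}{C}$ for the constraint violation at $\bar p$, the goal becomes an exact lower bound of the form $\varphi(\bar p,x)\ge\varphi(\bar p,\bar x)-\lambda_*\,e(x)$ near $\bar x$.

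The core of the argument is to produce this bound in the regime where $x$ is only mildly infeasible. Given $x$ with $e(x)>0$, hypothesis (iii) supplies, for every $\epsilon>0$, a parameter $p\in F^{+1}(\cdot,x)(C)$ (so that $x\in\FReg(p)$) with $\|p-\bar p\|\le\beta\,e(x)+\epsilon$; moreover $p\ne\bar p$, because $e(x)>0$ forces $\bar p\notin F^{+1}(\cdot,x)(C)$. As long as $\|p-\bar p\|$ remains within the radii governing (ii) and (iv), problem calmness (iv) gives $\varphi(p,x)\ge\varphi(\bar p,\bar x)-\lambda_0\|p-\bar p\|$, with $\lambda_0$ the calmness modulus, while the uniform calm-from-above property (ii) gives $\varphi(\bar p,x)\ge\varphi(p,x)-\gamma\|p-\bar p\|$. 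Chaining the two and letting $\epsilon\downarrow 0$ yields
$$
  \varphi(\bar p,x)\ge\varphi(\bar p,\bar x)-(\lambda_0+\gamma)\beta\,e(x),
$$
which is precisely the exact-penalty inequality with modulus $(\lambda_0+\gamma)\beta$. When $e(x)=0$ this is trivial: then $x\in\FReg(\bar p)$, and optimality of $\bar x$ over the feasible set forces $\varphi(\bar p,x)\ge\varphi(\bar p,\bar x)$.

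The hard part is that the chain above is only legitimate when the auxiliary parameter $p$ falls inside the common neighbourhood of $\bar p$ on which (ii) and (iv) are valid, i.e.\ only when $e(x)$ is small; nothing in the hypotheses prevents points $x$ arbitrarily close to $\bar x$ from being strongly infeasible, in which case the recovered $p$ may escape that neighbourhood. I would resolve this by a dichotomy based on a threshold $e_0:=\tfrac{1}{2\beta}\min\{r_\gamma,r\}$ chosen independently of $\lambda$, so that $e(x)\le e_0$ guarantees $\|p-\bar p\|\le\beta e_0+\epsilon<\min\{r_\gamma,r\}$ and the estimate of the previous paragraph applies. For the complementary regime $e(x)>e_0$ I would invoke lower semicontinuity (i): used with tolerance $1$, it furnishes a radius $\rho_1>0$ with $\varphi(\bar p,x)\ge\varphi(\bar p,\bar x)-1$ on $\ball{\bar x}{\rho_1}$, and there the large penalty term is expected to dominate.

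It then remains to patch the two regimes with a single constant. Setting $\rho:=\min\{\rho_1,r_\beta,r_\gamma,r\}$ and $\lambda_*:=\max\{(\lambda_0+\gamma)\beta,\ 1/e_0\}$, the verification should be routine: for $\lambda>\lambda_*$ and $e(x)\le e_0$ one has $\lambda\,e(x)\ge(\lambda_0+\gamma)\beta\,e(x)$, whence $\varphi_\lambda(x)\ge\varphi(\bar p,\bar x)$; while for $e(x)>e_0$ one gets $\varphi_\lambda(x)\ge\varphi(\bar p,\bar x)-1+\lambda e_0>\varphi(\bar p,\bar x)-1+\lambda_* e_0\ge\varphi(\bar p,\bar x)$. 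This shows $\bar x$ to be an unconstrained local minimizer of $\varphi_\lambda$ on $\ball{\bar x}{\rho}$ for every $\lambda>\lambda_*$, i.e.\ the claimed penalty function.
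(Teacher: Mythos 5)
Your proof is correct, and it is organized differently from the paper's. You verify the exact-penalty inequality directly, whereas the paper argues by contradiction: it assumes no penalty function exists, extracts a sequence $x_k\to\bar x$ with $\varphi(\bar p,x_k)+k\,\exc{F(\bar p,x_k)}{C}<\varphi(\bar p,\bar x)$, uses l.s.c.\ (i) to force $\exc{F(\bar p,x_k)}{C}\to 0$ --- so that the parameters $p_k$ recovered via (iii) automatically converge to $\bar p$ and land in the neighbourhoods where (ii) and (iv) operate --- and then produces difference quotients bounded above by $-k/\tilde\beta+\gamma\to-\infty$, violating calmness (iv). The key estimate is the same in both arguments: hypothesis (iii) supplies a parameter $p$ with $\|p-\bar p\|$ controlled by $\beta\,\exc{F(\bar p,x)}{C}$ for which $x\in\FReg(p)$, calmness (iv) bounds $\varphi(p,x)$ from below by $\varphi(\bar p,\bar x)-\lambda_0\|p-\bar p\|$, and (ii) transfers this to $\varphi(\bar p,x)$. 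The genuine divergence is in the role of hypothesis (i): the paper uses it to shrink the constraint violation along the contradicting sequence, while you use it to secure the uniform lower bound $\varphi(\bar p,\cdot)\ge\varphi(\bar p,\bar x)-1$ near $\bar x$, which handles the strongly-infeasible regime $\exc{F(\bar p,x)}{C}>e_0$ of your dichotomy, where the penalty term dominates; this dichotomy is the extra idea your direct route requires, since (as you correctly note) nothing prevents points near $\bar x$ from being badly infeasible. What your route buys is quantitative information invisible in the contradiction argument: an explicit penalty threshold $\lambda_*=\max\{(\lambda_0+\gamma)\beta,\,1/e_0\}$ and a localization radius $\rho$ independent of $\lambda$. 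What the paper's route buys is economy: the sequential framework disposes of the threshold bookkeeping automatically and keeps the proof shorter.
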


\begin{proof}
Ab absurdo, assume that ($\POPp{\bar p}$) fails to admit a penalty function at
$\bar x$. This means that for each $\lambda_*\ge 0$ there exist
$k\in\N$, with $k>\lambda_*$, and $x_k\in\ball{\bar x}{1/k}$, such that
\begin{equation}\label{in:nopenf}
  \varphi(\bar p,x_k)+k\exc{F(\bar p,x_k)}{C}<\varphi(\bar p,\bar x).
\end{equation}
Since $\bar x$ is a local solution to ($\POPp{\bar p}$) the inequality in
(\ref{in:nopenf}) implies the existence of $k_0\in\N$ such that, for every
$k\in\N$, with $k\ge k_0$, it is $x_k\not\in\FReg(\bar p)$, so
$$
  \exc{F(\bar p,x_k)}{C}>0.
$$
Because of $x_k\longrightarrow\bar x$ as $k\to\infty$, then by taking into account
hypothesis (i), from (\ref{in:nopenf}) one deduces
$$
  \limsup_{k\to\infty}k\exc{F(\bar p,x_k)}{C}\le\limsup_{k\to\infty}
  [\varphi(\bar p,\bar x)-\varphi(\bar p,x_k)]=
  \varphi(\bar p,\bar x)-\liminf_{k\to\infty} \varphi(\bar p,x_k)\le 0.
$$
The last inequality entails
\begin{equation}\label{eq:limexc0}
  \exists\ \lim_{k\to\infty}\exc{F(\bar p,x_k)}{C}=0^+.
\end{equation}
On the other hand, again because of $x_k\longrightarrow\bar x$ as $k\to\infty$,
up to an increase of the value of $k_0$ if needed, one has
$x_k\in\ball{\bar x}{r_\beta}$, where $r_\beta>0$ is as in hypothesis (iii).
Consequently, one obtains
$$
  \dist{\bar p}{F^{+1}(\cdot,x_k)(C)}\le\beta\exc{F(\bar p,x_k)}{C},
  \quad\forall k\in\N,\ k\ge k_0.
$$
Thus, by taking $\tilde{\beta}>\beta$ and recalling that $\dist{\bar p}{F^{+1}(\cdot,x_k)(C)}>0$
as $x_k\not\in\FReg(\bar p)$, for each $k\ge k_0$ it is possible
to get the existence of $p_k\in F^{+1}(\cdot,x_k)(C)$ with the property that
\begin{equation}    \label{in:tildebetaexc}
  \tilde{\beta}^{-1}d(\bar p,p_k)<\exc{F(\bar p,x_k)}{C}
\end{equation}
and $F(p_k,x_k)\subseteq C$, so $x_k\in\FReg(p_k)$.
Notice that it must be $p_k\ne\bar p$ for every $k\ge k_0$,
otherwise it would result in $x_k\in\FReg(\bar p)$, what has been already
excluded above. Moreover, on the account of (\ref{eq:limexc0}), the estimate
in (\ref{in:tildebetaexc}) entails that the sequence $(p_k)_k$ converges
to $\bar p$ as $k\to\infty$.
By combining the inequalities in (\ref{in:nopenf}) and in
(\ref{in:tildebetaexc}), one obtains
$$
  \frac{\varphi(\bar p,x_k)-\varphi(\bar p,\bar x)}{\tilde{\beta}^{-1}d(\bar p,p_k)}
  \le \frac{\varphi(\bar p,x_k)-\varphi(\bar p,\bar x)}{\exc{F(\bar p,x_k)}{C}}<-k,
  \quad\forall k\ge k_0,
$$
which yields
\begin{equation}     \label{in:phidkbeta}
  \frac{\varphi(\bar p,x_k)-\varphi(\bar p,\bar x)}{d(\bar p,p_k)}
  \le -\frac{k}{\tilde{\beta}}, \quad\forall k\ge k_0.
\end{equation}
Since by hypothesis (ii) and the convergence of $(p_k)_k$, up to a further increase of the value
of $k_0$, if needed, so that $x_k\in\ball{\bar x}{r_\gamma}$ and $p_k\in\ball{\bar p}{r_\gamma}$,
for $\gamma>0$ one has
$$
  \varphi(p_k,x_k)-\gamma\|p_k-\bar p\|\le\varphi(\bar p,x_k),
  \quad\forall k\ge k_0,
$$
from the inequality in (\ref{in:phidkbeta}) it follows
$$
  \frac{\varphi(p_k,x_k)-\varphi(\bar p,\bar x)}{\|\bar p-p_k\|}
  \le -\frac{k}{\tilde{\beta}}+\gamma, \quad\forall k\ge k_0.
$$
This amounts to say that for every $k\ge k_0$ there exists $p_k\in\ball{\bar p}{r_\gamma}
\backslash\{\bar p\}$ such that
$$
  \inf_{x\in\ball{\bar x}{r_\gamma}\cap\FReg(p_k)}
  \frac{\varphi(p_k,x)-\varphi(\bar p,\bar x)}{\|\bar p-p_k\|}
  \le -\frac{k}{\tilde{\beta}}+\ell.
$$
The last inequality shows that the condition (\ref{in:defprobcalm}) in
Definition \ref{def:propcalm} is violated, what contradicts the assumption (iv)
on the calmness of ($\POPp{\bar p}$) at $\bar x$.
This completes the proof.
\end{proof}

\begin{remark}    \label{rem:philocLipmsubreg}
(i) It is readily seen that both hypotheses (i) and (ii) of
Theorem \ref{thm:penalfexist} happen to be satisfied, in particular,
by any function $\varphi$, which is locally Lipschitz around $(\bar p,\bar x)$.

(ii) Hypothesis (iii) can be regarded as an error bound condition leading
to a kind of metric subregularity for the set-valued mapping $F(\cdot,x)$
at $\bar p$, which is uniform with respect to $x$ around $\bar x$.
\end{remark}

\begin{corollary}
If the problems in ($\POP$) are q.c.s.v.i. and for any $\bar p\in\dom\val=\Pb$
there exist positive $\beta$ and $r_\beta$ for which the inequality in (\ref{in:methem})
is true, then problem ($\POPp{\bar p}$) admits a penalty function at any $\bar x\in\Argmin(\bar p)$.
\end{corollary}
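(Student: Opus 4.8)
The plan is to derive the statement directly from Theorem \ref{thm:penalfexist}, by checking that its four hypotheses (i)--(iv) are all met under the assumptions of the corollary. The argument thus reduces to verifying, for the given $\bar p\in\Pb$ and an arbitrary $\bar x\in\Argmin(\bar p)$, that $\varphi(\bar p,\cdot)$ is l.s.c. at $\bar x$, that $\varphi(\cdot,\bar x)$ is calm from above at $\bar p$ uniformly in $x$, that the error bound (\ref{in:methem}) holds, and that ($\POPp{\bar p}$) is calm at $\bar x$.

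First I would dispatch hypotheses (i) and (ii) simultaneously. Since the problems in ($\POP$) are q.c.s.v.i., assumption (i) of Theorem \ref{thm:lLipval} guarantees that $\varphi$ is convex and continuous on $\inte\dom\varphi=\Pb\times\X$. By the classical fact that a convex function continuous on an open set is automatically locally Lipschitz around each of its points, $\varphi$ is locally Lipschitz around $(\bar p,\bar x)$. Invoking Remark \ref{rem:philocLipmsubreg}(i), this local Lipschitz behaviour is precisely what secures both the lower semicontinuity demanded in (i) and the uniform calmness from above demanded in (ii).

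Next I would address hypothesis (iii): this is nothing other than the additional error bound condition (\ref{in:methem}), which has been assumed to hold for the given $\bar p$ (and hence, with the attendant $\beta$ and $r_\beta$, around the relevant $\bar x$), so it requires no further work. Finally, hypothesis (iv)---the calmness of ($\POPp{\bar p}$) at $\bar x$---is supplied verbatim by Corollary \ref{cor:qcsvicalm}, which establishes exactly this property for any q.c.s.v.i. family, any $\bar p\in\dom\val=\Pb$, and any $\bar x\in\Argmin(\bar p)$. With all four hypotheses of Theorem \ref{thm:penalfexist} in force, that theorem immediately yields a penalty function for ($\POPp{\bar p}$) at $\bar x$.

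I do not anticipate any genuine obstacle: the result is simply the confluence of Theorem \ref{thm:penalfexist} with the two structural features of the q.c.s.v.i. framework---local Lipschitzness of $\varphi$ and the problem calmness of Corollary \ref{cor:qcsvicalm}---supplemented by the error bound (\ref{in:methem}) as an explicit hypothesis. The only point warranting a moment's attention is the step from \emph{convex and continuous} to \emph{locally Lipschitz}, which is a standard result of convex analysis and may be cited rather than reproved.
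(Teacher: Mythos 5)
Your proposal is correct and follows essentially the same route as the paper's own proof: both dispatch hypotheses (i) and (ii) of Theorem \ref{thm:penalfexist} via the local Lipschitz continuity of the convex continuous function $\varphi$ together with Remark \ref{rem:philocLipmsubreg}(i), take hypothesis (iii) as given, obtain hypothesis (iv) from Corollary \ref{cor:qcsvicalm}, and then invoke Theorem \ref{thm:penalfexist}. No gaps to report.
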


\begin{proof}
As a continuous convex function, $\varphi$ is also locally Lipschitz around
$(\bar p,\bar x)$. On the basis of what was observed in Remark \ref{rem:philocLipmsubreg}(i),
both hypotheses (i) and (ii) of Theorem \ref{thm:penalfexist} are then fulbilled.
Since problems in in ($\POP$) are q.c.s.v.i., then by Corollary \ref{cor:qcsvicalm}
($\POPp{\bar p}$) is a calm problem at any $\bar x\in\Argmin(\bar p)$.
Thus, the thesis follows from Theorem \ref{thm:penalfexist}.
\end{proof}

It is clear that the existence of a penalty function paves the way
to deriving optimality conditions for constrained optimization
problems from those valid in the unconstrained case. In the
specific case of q.c.s.v.i. problems, it must be noticed that,
since $\FReg$ is a convex multifunction, so it takes convex values,
each problem in ($\POP$) falls in the realm of convex optimization.
Consequently, any solution $\bar x\in\Argmin(\bar p)$ is global.
Moreover, in the light of Lemma \ref{lem:convexcF}, each penalized
function $x\mapsto\varphi_\lambda(\bar p,x)=\varphi(\bar p,x)+\lambda
\exc{F(\bar p,x)}{C}$ is convex, so many theoretical and
computational tools are at disposal for its minimization.
In particular, on the basis of the specific form taken
by $F$, various constructions of generalized differentiation
and subdifferential calculus rules could be exploited in order
to formulate first-order optimality conditions for the unconstrained
minimization problem
$$
  \min_{x\in\X}[\varphi(\bar p,x)+\lambda\exc{F(\bar p,x)}{C}].
$$
The exploration and assessment of such kind of research perspectives
will be the theme of future investigations.



\vskip 6mm


\end{document}